\documentclass[11pt,a4paper]{article}
\usepackage{authblk}
\usepackage[utf8]{inputenc}
\usepackage[T1]{fontenc}
\usepackage{lmodern}
\usepackage{microtype}
\usepackage{geometry}
\usepackage{amsmath,amssymb,amsthm,mathtools}
\usepackage{bm}
\usepackage{enumitem}
\usepackage{hyperref}
\hypersetup{colorlinks=true,linkcolor=blue,citecolor=blue,urlcolor=blue}
\usepackage{cleveref}
\usepackage{graphicx}
\usepackage{caption,subcaption}
\usepackage{tikz}
\usetikzlibrary{positioning,decorations.pathmorphing}
\usepackage{booktabs}

\numberwithin{equation}{section}
\theoremstyle{plain}
\newtheorem{theorem}{Theorem}[section]
\newtheorem{lemma}[theorem]{Lemma}
\newtheorem{proposition}[theorem]{Proposition}
\newtheorem{corollary}[theorem]{Corollary}
\theoremstyle{definition}
\newtheorem{definition}[theorem]{Definition}

\theoremstyle{remark}
\newtheorem{remark}[theorem]{Remark}

\title{Successive vertex orderings of graphs}

\author[]{Prarthana Agrawal}
\author[]{Abdurrahman Hadi Erturk}
\author[]{Ard A. Louis\thanks{prarthana.agrawal@physics.ox.ac.uk, abdurrahman.erturk@physics.ox.ac.uk, ard.louis@physics.ox.ac.uk}}

\affil[]{Rudolf Peierls Centre for Theoretical Physics\\
University of Oxford, United Kingdom OX1 3PU\\
}

\date{}

\begin{document}
\maketitle

\begin{abstract}
A successive vertex ordering of a graph is a linear ordering of its vertices in which every vertex except the first has at least one neighbour appearing earlier. Such orderings arise naturally in incremental growth and connectivity-preserving constructions, where vertices are added sequentially and must attach to the existing structure. We derive an exact formula for the number of successive vertex orderings of any finite connected graph. The formula is obtained via an inclusion--exclusion argument over independent sets and depends on two explicit combinatorial parameters, one of which is defined recursively. The result applies to all finite connected graphs without requiring regularity or symmetry assumptions. We also express the enumeration as a weighted generating polynomial over independent sets; its value at  $x = -1$ recovers the total count of successive orderings, and the $k$-th derivative at this point encodes the number of orderings in which exactly $k$ non-first vertices appear before all of their neighbours.
\end{abstract}

\noindent\textbf{Keywords:} connected graphs, successive vertex orderings, graph polynomials\\
\noindent\textbf{Mathematics Subject Classifications:} 05C30, 05A15, 05C31, 05C69

\section{Introduction}\label{sec:introduction}

A finite connected graph can be built vertex by vertex in many ways, provided each newly added vertex attaches to the part already constructed. Counting the number of such connectivity-preserving growth sequences is a natural combinatorial problem that arises in a variety of settings. Examples include the assembly of jigsaw puzzles, where each newly placed piece attaches to the existing partial puzzle, the self-assembly of macromolecular complexes, and the spread of infections through contact networks. Mathematically, these growth sequences correspond to \emph{successive vertex orderings}, that is, linear orderings of the vertices in which every vertex except the first has at least one neighbour appearing earlier.

Counting successive vertex orderings is a nontrivial enumerative problem, and exact formulas are known only in special cases. Recently, Fang et al.~\cite{fang2023} derived closed expressions for \emph{fully regular}
graphs \(G=(V,E)\), in which, for every independent set \(I\subseteq V\), the number of vertices outside the closed neighbourhood of \(I\) depends only on \(|I|\). The fully regular condition is, however, quite restrictive and excludes most connected graphs. Related edge-ordering problems have been studied by Gao and Peng~\cite{gao2021counting}, who derived exact formulas for shellings of complete bipartite graphs. 

In this paper, we derive an exact counting formula for successive vertex orderings of any finite connected graph, with no regularity or symmetry assumptions. The formula is expressed as an alternating sum over independent sets, with the local neighbourhood structure encoded by a recursively defined factor.  
We now introduce the necessary definitions and state the main result.

Let \(G=(V,E)\) be a finite connected graph with vertex set \(V\) and edge set \(E\). Write \(n:=|V|\). A \emph{linear ordering} of \(V\) is a bijection 
\(
\pi:V\to\{1,2,\dots,n\}.
\)

\begin{definition}
A linear ordering \(\pi\) of \(V\) is said to be \emph{successive} if for every vertex \(v\in V\) with \(\pi(v)>1\), there exists a neighbour \(u\sim v\) such that \(\pi(u)<\pi(v)\).
\end{definition}

Equivalently, for every \(i\ge 1\), the subgraph of \(G\) induced by the vertices \(\{v\in V:\pi(v)\le i\}\) is connected. 
Let \(\sigma(G)\) denote the number of successive linear orderings of \(V\).

For a subset \(I\subseteq V\), let \(N(I)\) denote its open neighbourhood, that is, the set of vertices adjacent to at least one vertex of \(I\), and write
\(N[I]=I\cup N(I)\) for the corresponding closed neighbourhood.
We define
\begin{equation}\label{eq:def_a}
a(I) := |V\setminus N[I]| = n-|N[I]|
\end{equation}
Thus \(a(I)\) counts the vertices that lie outside the closed neighbourhood of \(I\).

A subset \(I\subseteq V\) is \emph{independent} if no two of its vertices are adjacent.
Let \(\mathcal I(G)\) denote the family of independent sets of \(G\).

For an independent set \(I\in \mathcal I(G)\), we define a quantity \(b(I)\)
recursively by
\begin{equation}\label{eq:def_b_rec}
b(\varnothing)=1,
\qquad
b(I)
=
\frac{1}{\,n-a(I)\,}
\sum_{v\in I} b(I\setminus\{v\}),
\ \ \ I\neq\varnothing
\end{equation}
For every nonempty independent set \(I\),
\(
n-a(I)=|N[I]|\ge |I|\ge 1,
\)
so the recursion is well defined.
In Section~\ref{sec:proof} we show that this recursion admits an explicit representation as a finite sum over all linear orderings of \(I\).

\begin{theorem}\label{thm:main}
Let \(G=(V,E)\) be a finite connected graph with \(|V|=n\).
Then the number \(\sigma(G)\) of successive vertex orderings of \(G\) is given by
\begin{equation}\label{eqn:main}
\sigma(G)
=
n!
\sum_{\substack{I\subseteq V\\ I\ \mathrm{independent}}}
(-1)^{|I|}
\frac{a(I)}{n}\,b(I),
\end{equation}
where \(a(I)\) is defined by \eqref{eq:def_a} and \(b(I)\) is defined by the recursion
\eqref{eq:def_b_rec}.
\end{theorem}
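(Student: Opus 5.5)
The plan is inclusion--exclusion over the set of vertices that appear before all of their neighbours, with $b(I)$ turning out to be a probability under a uniformly random ordering. For a linear ordering $\pi$, call a vertex $v$ \emph{early} if $\pi(v)<\pi(u)$ for every neighbour $u\sim v$. The first vertex $\pi^{-1}(1)$ is always early. Since $G$ is connected, every vertex has a neighbour, so $\pi$ is successive exactly when no vertex other than the first is early.

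Two adjacent vertices cannot both be early, because each would have to precede the other. Hence any set of early vertices is independent. For an independent set $I$, let $E_I$ be the set of orderings in which every vertex of $I$ is early and the first vertex does not lie in $I$. Inclusion--exclusion over the set of early non-first vertices gives
\[
\sigma(G)=\sum_{I\in\mathcal I(G)}(-1)^{|I|}\,|E_I|.
\]
The empty set contributes $n!$, which matches $a(\varnothing)=n$ and $b(\varnothing)=1$. It therefore suffices to prove $|E_I|=n!\,\frac{a(I)}{n}\,b(I)$.

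Choose $\pi$ uniformly at random and let $P(I)$ be the probability that every vertex of $I$ is early. I claim $P(I)=b(I)$, by induction on $|I|$.
\begin{itemize}[leftmargin=*]
\item The event depends only on the relative order of the vertices in $N[I]$, which is uniform.
\item Let $w$ be the earliest vertex of $N[I]$. If $w\in N(I)$, then $w$ is adjacent to some $v\in I$ that must precede it, which is impossible. So on the event, $w$ lies in $I$.
\item Each $v\in I$ is earliest in $N[I]$ with probability $1/|N[I]|$. When $v$ is earliest, $v$ is automatically early, since all its neighbours lie in $N[I]$.
\item Conditioned on $v$ being earliest, the relative order of $N[I]\setminus\{v\}$ is still uniform. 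Because $I$ is independent, $N[I\setminus\{v\}]\subseteq N[I]\setminus\{v\}$. So the conditional probability that all of $I\setminus\{v\}$ are early is $P(I\setminus\{v\})$.
\end{itemize}
This gives $P(I)=\frac{1}{|N[I]|}\sum_{v\in I}P(I\setminus\{v\})$, which is exactly the recursion \eqref{eq:def_b_rec}, since $|N[I]|=n-a(I)$.

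Next we add the condition that the first vertex is not in $I$. On the event that all of $I$ is early, if the overall first vertex lies in $N[I]$, it is the earliest vertex of $N[I]$ and hence lies in $I$. So on this event, ``first vertex $\notin I$'' is equivalent to ``first vertex $\in V\setminus N[I]$''.

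The main point needing care is that this condition is independent of the early-event. Whether the first position is occupied by an element of $N[I]$ depends only on which set the first entry falls in. The relative order within $N[I]$ is independent of that; formally, compose $\pi$ with a uniformly random permutation of $N[I]$. Hence the two conditions are independent, and the second has probability $a(I)/n$.

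Combining, $|E_I|=n!\cdot b(I)\cdot a(I)/n$. Substituting into the inclusion--exclusion sum yields \eqref{eqn:main}.
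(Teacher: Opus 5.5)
Your proof is correct. Its outer layer, inclusion--exclusion over non-first vertices that precede all their neighbours restricted to independent sets, is the same as the paper's. The difference is in how you compute $\Pr(B_I)$.

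The paper decomposes $B_I$ according to the full internal order $\rho$ of $I$. It writes each piece as $F_0\cap F_1\cap\cdots\cap F_k$, where $F_j$ says that $\rho_j$ is earliest in $N[\{\rho_j,\dots,\rho_k\}]$, and evaluates it by the chain rule. This yields the explicit formula \eqref{eq:def_b_sum} for $b(I)$ as a sum over orderings of $I$, and the recursion is then derived by splitting on $\rho_1$.

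You instead prove the recursion directly, by conditioning once on the earliest vertex of $N[I]$ and inducting. You peel off the factor $a(I)/n$ with a separate independence argument: whether position $1$ is occupied by an element of $N[I]$ is independent of the relative order inside $N[I]$. The paper folds this in as the first link $F_0$ of its chain.

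Your version is leaner. Each conditioning involves a single event, so the conditional-uniformity claims are immediate. The paper must argue that conditioning on $F_0\cap\cdots\cap F_{j-1}$ simultaneously leaves the order within $N[\{\rho_j,\dots,\rho_k\}]$ uniform. Your route also gives the clean interpretation that $b(I)$ is exactly the probability that every vertex of $I$ precedes all of its neighbours, with $a(I)/n$ an independent factor.

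In exchange, the paper's route delivers the closed sum-over-permutations form of $b(I)$ directly, which is what is used later, e.g.\ in Theorem~\ref{thm:fully_regular_b}. Your induction produces that formula only after unrolling.
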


The remainder of the paper is organized as follows. In Section~\ref{sec:proof} we prove Theorem~\ref{thm:main} by expressing the number of successive vertex orderings as an inclusion--exclusion sum over independent sets \(I\subseteq V\).
In Section~\ref{sec:mobius} we reformulate the same identity using Möbius inversion on the Boolean lattice of vertex subsets. 
In Section~\ref{sec:bounds} we investigate structural properties of \(b(I)\), including explicit evaluations for several graph families.
In Section~\ref{sec:polynomial} we introduce the successive ordering polynomial and its multivariate extension, and show how these encode both the total number of successive vertex orderings and the distribution of vertices that fail the successive condition in a given ordering. 
We conclude in Section~\ref{sec:conclusion} with several open problems and directions for future work.
Details of the algorithmic computation of $\sigma(G)$, its time complexity, and a worked example are given in Appendix~\ref{appendix:algorithm}. 

\section{Proof of Theorem~\ref{thm:main}} 
\label{sec:proof}

Let \(\Omega\) denote the set of all bijections \(\pi:V\to[n]\), that is, the set of all linear orderings of \(V\). We choose \(\pi\) uniformly at random from \(\Omega\), so that each ordering has probability \(1/n!\).
Write
\[
\sigma'(G)=\Pr(\pi \text{ is successive})
=\frac{\sigma(G)}{n!}
\]
for the probability that a uniformly chosen ordering is successive.

For each vertex \(v\in V\), define the \emph{bad event}
\[ B_v =
\bigl\{
\pi(v)\neq 1
\ \text{and}\
\pi(v)<\pi(u)\ \text{for all }u\in N(v)
\bigr\}
\]
A linear ordering \(\pi\) is successive if and only if
\( \pi\in 
\bigcap_{v\in V} \overline{B_v}\).
Hence,
\[ \sigma'(G) =
\Pr\!\left(\bigcap_{v\in V} \overline{B_v}\right)
\]
By the inclusion--exclusion principle,
\[
\sigma'(G) = 
\sum_{I\subseteq V}
(-1)^{|I|}
\Pr\!\left(\bigcap_{v\in I} B_v\right)
\]

For a subset \(I\subseteq V\), write
\[ B_I := \bigcap_{v\in I} B_v \]
If \(I\) contains two adjacent vertices \(u\sim v\), then
\(B_u\) and \(B_v\) are mutually exclusive, and hence
\(\Pr(B_I)=0\).
Therefore the sum reduces to independent sets:
\begin{equation}
\label{eq:inc-exc}
\sigma'(G) = \sum_{\substack{I\subseteq V\\ I\ \mathrm{independent}}}
(-1)^{|I|}
\Pr(B_I)
\end{equation}
It remains to compute \(\Pr(B_I)\) for an arbitrary independent set \(I\subseteq V\).
Fix such an independent set \(I\) and write \(|I|=k\).
The event \(B_I\) requires that each vertex \(v\in I\) is not first and appears
earlier than all of its neighbours. These constraints do not impose any restriction on the relative order of the vertices of \(I\) themselves. Thus every ordering \(\pi\in B_I\) induces a unique internal ordering of \(I\).

Let \(S_I\) denote the set of all permutations of the elements of \(I\),
and write \(\rho=(\rho_1,\dots,\rho_k)\in S_I\).
For each \(\rho\in S_I\), define the event \(C_\rho\) by requiring that
\begin{itemize}
\item the vertices of \(I\) appear in the relative order
      \(\rho_1\prec\rho_2\prec\cdots\prec\rho_k\);
\item no vertex of \(N[I]\) occupies the first position, i.e.
      \(\pi^{-1}(1)\notin N[I]\);
\item for each \(j=1,\dots,k\), the vertex \(\rho_j\) is the earliest element
      of the closed neighbourhood \(N[\{\rho_j,\rho_{j+1},\dots,\rho_k\}]\).
\end{itemize}
The closed neighbourhoods
\[
N[I]\supseteq N[\{\rho_2,\dots,\rho_k\}]
\supseteq \cdots \supseteq
N[\{\rho_k\}]
\]
form a nested sequence.
Each event \(C_\rho\) prescribes a distinct relative ordering of the vertices of \(I\).
Since any ordering \(\pi\) induces a unique internal ordering of \(I\), the events
\(C_\rho\) are pairwise disjoint.
Moreover, every \(\pi\in B_I\) induces a unique \(\rho\in S_I\) given by
the relative order of the vertices of \(I\) in \(\pi\), and by construction
\(\pi\in C_\rho\). Conversely, every \(\pi\in C_\rho\) lies in \(B_I\).
Thus
\[
B_I=\bigsqcup_{\rho\in S_I} C_\rho
\]
and hence
\[
\Pr(B_I)=\sum_{\rho\in S_I}\Pr(C_\rho).
\]

We now compute \(\Pr(C_\rho)\). For \(0\le j\le k\), define the events
\[
F_0 := \{\pi^{-1}(1)\notin N[I]\},
\]
and, for \(1\le j\le k\),
\[
F_j := \bigl\{\rho_j \text{ is the earliest vertex of } N[\{\rho_j,\dots,\rho_k\}]\bigr\}.
\]
Then
\[
C_\rho = F_0\cap F_1\cap\cdots\cap F_k.
\]
By the chain rule,
\[
\Pr(C_\rho)
=
\Pr(F_0)\prod_{j=1}^k
\Pr\!\bigl(F_j \mid F_0\cap\cdots\cap F_{j-1}\bigr).
\]

Now
\[
\Pr(F_0)=\frac{a(I)}{n}.
\]
For each \(j\ge1\), conditioning on
\(F_0\cap\cdots\cap F_{j-1}\) determines only which vertices precede
\(N[\{\rho_j,\dots,\rho_k\}]\) as a whole. Indeed, \(F_0\) places the
first vertex outside \(N[I]\supseteq N[\{\rho_j,\dots,\rho_k\}]\), while
for \(1\le \ell\le j-1\), the event \(F_\ell\) implies that
\(\rho_\ell\) precedes every element of
\(N[\{\rho_j,\dots,\rho_k\}]\). None of these conditions constrains the
relative order within \(N[\{\rho_j,\dots,\rho_k\}]\). By symmetry of the
uniform random permutation, the conditional distribution of this
internal order therefore remains uniform, and hence each vertex of the
set is equally likely to be its earliest element.
\[
\Pr\!\bigl(F_j \mid F_0\cap\cdots\cap F_{j-1}\bigr)
=
\frac{1}{\bigl|N[\{\rho_j,\dots,\rho_k\}]\bigr|}
=
\frac{1}{\,n-a(\{\rho_j,\dots,\rho_k\})\,}.
\]
Therefore
\[
\Pr(C_\rho)
=
\frac{a(I)}{n}
\prod_{j=1}^k
\frac{1}{\,n-a(\{\rho_j,\dots,\rho_k\})\,}.
\]

Summing over all \(\rho\in S_I\) gives
\[
\Pr(B_I)
=
\frac{a(I)}{n}
\sum_{\rho\in S_I}
\prod_{j=1}^{k}
\frac{1}{\,n-a(\{\rho_j,\dots,\rho_k\})\,}
\]
Define
\begin{equation}
\label{eq:def_b_sum}
    b(I)
    :=
    \sum_{\rho\in S_I}
    \prod_{j=1}^{k}
    \frac{1}{\,n-a(\{\rho_j,\dots,\rho_k\})\,}
\end{equation}
Then
\begin{equation}
\label{eq:pr_bi}
\Pr(B_I)=\frac{a(I)}{n}\,b(I)
\end{equation}

Partition the permutations in \(S_I\) according to their first element.
If \(\rho_1=v\in I\), then the first factor in the product equals
$\frac{1}{n-a(I)},$
since \(\{\rho_1,\dots,\rho_k\}=I\).
The remaining factors are
\(
\prod_{j=2}^{k}
\frac{1}{n-a(\{\rho_j,\dots,\rho_k\})}.
\)
As the tail \((\rho_2,\dots,\rho_k)\) runs through all permutations of
\(I\setminus\{v\}\), re-indexing \(j\mapsto j-1\) shows that this product is precisely the defining product appearing in \(b(I\setminus\{v\})\).
Summing over all \(v\in I\) therefore yields
\[
b(I)
=
\frac{1}{n-a(I)}
\sum_{v\in I}
b(I\setminus\{v\}),
\]
which is the recursion \eqref{eq:def_b_rec}.
Substituting \eqref{eq:pr_bi} into \eqref{eq:inc-exc} yields
\[
\sigma'(G)
=
\sum_{\substack{I\subseteq V\\ I\ \mathrm{independent}}}
(-1)^{|I|}
\frac{a(I)}{n}\,b(I)
\]
and multiplying by \(n!\) completes the proof.

\begin{remark}[Multi-seed growth]
The successive vertex ordering framework established above describes a connected growth process initiated from a single vertex. A natural extension arises when the process begins from a prescribed independent set \(S\subseteq V\) of \(k\) seed vertices. One may then ask for the number \(\tau(G,S)\) of orderings in which the vertices of \(S\) occupy the first \(k\) positions (in arbitrary order), and every subsequent vertex has a neighbour appearing earlier. Using the same inclusion--exclusion framework as in the proof of Theorem~\ref{thm:main}, we obtain an exact formula for \(\tau(G,S)\) involving independent sets of the induced subgraph \(G[V\setminus S]\). The statement and proof of this extension are given in Appendix~\ref{appendix:seeded_svo}.
\end{remark}

\section{Möbius Duality}
\label{sec:mobius}

We reformulate the inclusion--exclusion identity underlying Theorem~\ref{thm:main} in terms of Möbius inversion on the Boolean lattice of vertex subsets. This leads naturally to the introduction of complementary “good” events and reveals a dual relationship between the two families of events.

For each vertex \(v\in V\) define the \emph{good event}
\[ G_v = \bigl\{\pi\in\Omega:\ \pi(v)=1
\ \text{or}\ \text{there exists }\,u\in N(v)\ \text{with}\ \pi(u)<\pi(v)\bigr\}
\]
and for any \(U\subseteq V\) define
\[ G_U := \bigcap_{v\in U} G_v \]

\begin{proposition}\label{prop:general_U}
    For every \(U\subseteq V\),
    \begin{equation}\label{eq:GU_final}
    \Pr(G_U)  =
    \sum_{\substack{I\subseteq U\\ I\ \mathrm{independent}}}
    (-1)^{|I|}\frac{a(I)\,b(I)}{n}
    \end{equation}
    where \(a(I)\) and \(b(I)\) are as defined in \eqref{eq:def_a} and \eqref{eq:def_b_rec}.
    \end{proposition}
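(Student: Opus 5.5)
The plan is to rerun the inclusion--exclusion argument from the proof of Theorem~\ref{thm:main}, now restricted to the vertices of \(U\) instead of all of \(V\).

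First, note that \(G_v\) is exactly the complement \(\overline{B_v}\) of the bad event. A vertex \(v\) fails to be bad precisely when it is first, or when some neighbour precedes it. Hence
\[
G_U=\bigcap_{v\in U}\overline{B_v}.
\]
Applying inclusion--exclusion to the finite family \(\{B_v\}_{v\in U}\) gives
\[
\Pr(G_U)=\sum_{I\subseteq U}(-1)^{|I|}\Pr(B_I),
\]
with the convention \(B_\varnothing=\Omega\). In Möbius language, \(U\mapsto\Pr(G_U)\) is the zeta-transform, over the Boolean lattice of subsets of \(U\), of the function \(I\mapsto(-1)^{|I|}\Pr(B_I)\).

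Second, I will discard the non-independent sets exactly as in Section~\ref{sec:proof}. If \(I\subseteq U\) contains adjacent vertices \(u\sim v\), then \(B_u\cap B_v=\varnothing\), since each would have to precede the other. So \(\Pr(B_I)=0\), and only independent \(I\subseteq U\) survive.

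Third, I substitute \eqref{eq:pr_bi}, namely \(\Pr(B_I)=a(I)b(I)/n\). The point to check is that the derivation of \eqref{eq:pr_bi} used only that \(I\) is an independent subset of \(V\). It decomposed \(B_I\) into the events \(C_\rho\) and applied the chain rule. Nowhere did it use the ambient sum ranging over all of \(V\), so the formula applies verbatim to every independent \(I\subseteq U\). Here \(b(I)\) is given by \eqref{eq:def_b_sum}, and Section~\ref{sec:proof} showed that this agrees with the recursion \eqref{eq:def_b_rec}.

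Finally, I will verify the empty-set term separately, since \eqref{eq:pr_bi} was argued for \(k\ge1\). We have \(a(\varnothing)=n\) and \(b(\varnothing)=1\), so the term equals \(1=\Pr(\Omega)\), as required. Taking \(U=V\) recovers Theorem~\ref{thm:main}, which serves as a consistency check.

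I expect no real obstacle. The only delicate point is confirming that \eqref{eq:pr_bi} is a statement about an individual independent set \(I\), independent of the index set of the inclusion--exclusion sum, together with handling the empty-set term.
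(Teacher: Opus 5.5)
Your proposal is correct and follows the paper's proof: identify $G_v=\overline{B_v}$, apply inclusion--exclusion over subsets of $U$, discard non-independent $I$ because bad events of adjacent vertices are disjoint, and substitute \eqref{eq:pr_bi}. Your separate check of the $I=\varnothing$ term, and your point that \eqref{eq:pr_bi} holds for each independent set individually, are details the paper leaves implicit.
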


\begin{proof}
    By De Morgan's law and the inclusion--exclusion principle we have the identity
    \[
    \Pr(G_U)
    =\Pr\Bigl(\bigcap_{v\in U} G_v\Bigr)
    =\sum_{I\subseteq U}(-1)^{|I|}\Pr\Bigl(\bigcap_{v\in I} B_v\Bigr)
    = \sum_{I\subseteq U}(-1)^{|I|}\Pr(B_I)
    \]
    If \(I\) is not independent then \(\Pr(B_I)=0\), so the sum may be taken over independent \(I\) only.
    Substituting \(\Pr(B_I)\) from \eqref{eq:pr_bi} yields \eqref{eq:GU_final}.
\end{proof}

\begin{corollary}\label{cor:BI_mobius}
    For every independent set \(J\subseteq V\),
    \begin{equation}\label{eq:BI_mobius}
    \Pr(B_J)
    =
    \frac{a(J)\,b(J)}{n}
    =
    \sum_{T\subseteq J}(-1)^{|T|}\Pr(G_T)
    \end{equation}
\end{corollary}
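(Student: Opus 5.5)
The first equality in \eqref{eq:BI_mobius} is exactly \eqref{eq:pr_bi}, established in the proof of Theorem~\ref{thm:main} for an arbitrary independent set. So only the second equality needs work. For it I will apply Möbius inversion on the Boolean lattice $2^J$ to the identity of Proposition~\ref{prop:general_U}, restricted to subsets of $J$.

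Fix an independent $J$ and define functions on subsets $U\subseteq J$ by $f(I):=(-1)^{|I|}\Pr(B_I)$ and $g(U):=\Pr(G_U)$. Every subset of the independent set $J$ is itself independent, so for $U\subseteq J$ the restriction ``$I$ independent'' in \eqref{eq:GU_final} is vacuous. Combining Proposition~\ref{prop:general_U} with \eqref{eq:pr_bi} then gives
\[
g(U)=\sum_{I\subseteq U} f(I)\qquad\text{for all } U\subseteq J .
\]
Möbius inversion on $2^J$, where $\mu(I,U)=(-1)^{|U|-|I|}$, yields
\[
f(J)=\sum_{T\subseteq J}(-1)^{|J|-|T|}g(T),
\]
and therefore
\[
(-1)^{|J|}\Pr(B_J)=\sum_{T\subseteq J}(-1)^{|J|-|T|}\Pr(G_T).
\]
Multiplying both sides by $(-1)^{|J|}$ gives $\Pr(B_J)=\sum_{T\subseteq J}(-1)^{|T|}\Pr(G_T)$, as claimed.

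As a sanity check, the identity also follows directly. Since $G_v=\overline{B_v}$, inclusion–exclusion applied to $\bigcap_{v\in J} B_v=\bigcap_{v\in J}\overline{G_v}$ gives $\Pr(B_J)=\sum_{T\subseteq J}(-1)^{|T|}\Pr(G_T)$ immediately, with $G_\varnothing=\Omega$ contributing $1$.

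The only point needing care is the sign bookkeeping in the inversion, together with the observation that independence of $J$ makes every $T\subseteq J$ admissible in Proposition~\ref{prop:general_U}. Beyond that there is no real obstacle.
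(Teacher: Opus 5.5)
Your proof is correct and takes the same approach as the paper: the first equality is \eqref{eq:pr_bi}, and the second comes from Möbius inversion of \eqref{eq:GU_final} on the Boolean lattice of subsets of $J$, with the sign simplification $(-1)^{2|J|-|T|}=(-1)^{|T|}$ done correctly. Your direct inclusion--exclusion check via $G_v=\overline{B_v}$ is also valid, and it shows the second equality holds for every $J\subseteq V$; independence of $J$ is needed only for the first equality.
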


\begin{proof}
The second equality is obtained by applying Möbius inversion to \eqref{eq:GU_final}.
\end{proof}

\begin{remark}
Equations \eqref{eq:GU_final} and \eqref{eq:BI_mobius} express the Möbius duality on the Boolean lattice between the families \((G_U)_{U\subseteq V}\) and \((B_I)_{I\subseteq V}\).  Taking \(U=V\) in Proposition~\ref{prop:general_U} recovers Theorem~\ref{thm:main}.
\end{remark}

\section{Properties, Bounds, and Closed Forms of \(b(I)\)}
\label{sec:bounds}

\subsection{Extremal bounds for \(b(I)\)}
\label{subsec:universal_bounds}

\begin{theorem}[Universal upper bound]
\label{thm:b_upper}
Let \(G\) be a connected graph on \(n\ge 2\) vertices, and let \(I\) be a nonempty independent set of size \(k\). Then
\[
b(I)\le \frac{1}{k+1}.
\]
Moreover, equality holds if and only if there exists a vertex
\(x\in V\setminus I\) such that
\[
N[v]=\{v,x\}\quad\text{for all }v\in I.
\]
Equivalently, the vertices of \(I\) are leaves whose unique neighbour is the same vertex \(x\in V\setminus I\).
\end{theorem}

\begin{proof}
We proceed by induction on \(k=|I|\).

If \(k=1\), say \(I=\{v\}\), then
\[
b(I)=\frac1{|N[v]|}.
\]
Since \(G\) is connected and \(n\ge2\), we have \(|N[v]|\ge2\), and hence \(b(I)\le1/2\).

Now let \(k\ge2\), and assume the result holds for all independent sets of size \(k-1\). By the induction hypothesis,
\[
b(I\setminus\{v\})\le \frac1k
\qquad
(v\in I).
\]
Therefore
\[
b(I)
=
\frac1{|N[I]|}
\sum_{v\in I} b(I\setminus\{v\})
\le
\frac1{|N[I]|}.
\]
Since \(I\) is a proper nonempty subset of \(V\) and \(G\) is connected, \(|N[I]|\ge k+1\), yielding
\[
b(I)\le \frac1{k+1}.
\]

Suppose first that there exists \(x\in V\setminus I\) such that \(N[v]=\{v,x\}\) for every \(v\in I\). Then every nonempty subset \(J\subseteq I\) satisfies
\[
N[J]=J\cup\{x\},
\qquad
|N[J]|=|J|+1.
\]
A straightforward induction gives
\[
b(J)=\frac1{|J|+1},
\]
and hence \(b(I)=1/(k+1)\).

Conversely, suppose \(b(I)=1/(k+1)\). Equality must then hold throughout the preceding argument. Thus
\[
|N[I]|=k+1,
\qquad
b(I\setminus\{v\})=\frac1k
\quad (v\in I).
\]

The first condition implies that \(N[I]\setminus I=\{x\}\) for some vertex \(x\in V\setminus I\).

Fix \(v\in I\). By the induction hypothesis applied to \(I\setminus\{v\}\), there exists a vertex \(x_v\in V\setminus I\) such that
\[
N[u]=\{u,x_v\}
\qquad
(u\in I\setminus\{v\}).
\]
Since \(x_v\in N[I\setminus\{v\}] \subseteq N[I]\) and
\(x_v\notin I\), while \(N[I]\setminus I=\{x\}\),
it follows that \(x_v=x\). As \(k\ge2\), every vertex of \(I\) belongs to \(I\setminus\{v\}\) for some choice of \(v\), and therefore
\[
N[u]=\{u,x\}
\qquad
(u\in I).
\]
This proves the equality characterization.
\end{proof}

\begin{proposition}
\label{prop:b_lower}
Let \(C\ge1\) and \(m\ge0\), and let \(I\) be an independent set of size \(k\ge1\) such that
\[
|N[J]|\le C|J|+m
\]
for every nonempty subset \(J\subseteq I\). Then
\[
b(I)\ge
\prod_{j=1}^{k}
\frac{j}{Cj+m}.
\]
Moreover, equality holds if and only if
\[
|N[J]|=C|J|+m
\]
for every nonempty subset \(J\subseteq I\).
\end{proposition}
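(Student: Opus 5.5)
We argue by induction on \(k=|I|\), in the same way as the proof of Theorem~\ref{thm:b_upper}, and we use the recursion \eqref{eq:def_b_rec} in the form
\[
b(I)=\frac{1}{|N[I]|}\sum_{v\in I} b(I\setminus\{v\}).
\]
Write \(P_k:=\prod_{j=1}^{k}\frac{j}{Cj+m}\), and note that \(P_k=\frac{k}{Ck+m}P_{k-1}\) with \(P_0=1\). The hypothesis on \(I\) is inherited by every nonempty subset \(J\subseteq I\), because it quantifies over all nonempty subsets. So the induction hypothesis can be applied to each \(I\setminus\{v\}\).

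For the base case \(k=1\), we have \(I=\{v\}\) and \(b(I)=1/|N[v]|\ge 1/(C+m)=P_1\). Equality holds exactly when \(|N[v]|=C+m\).

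For the inductive step \(k\ge2\), each \(I\setminus\{v\}\) is a nonempty independent set of size \(k-1\) satisfying the hypothesis. The induction hypothesis therefore gives \(b(I\setminus\{v\})\ge P_{k-1}\). Summing over the \(k\) choices of \(v\) and using \(|N[I]|\le Ck+m\) gives
\[
b(I)\ge \frac{k\,P_{k-1}}{|N[I]|}\ge \frac{k}{Ck+m}P_{k-1}=P_k .
\]
All quantities involved are positive, so both inequalities are legitimate. The constraints \(C\ge1\) and \(m\ge0\) only ensure that \(Cj+m>0\) for all \(j\ge1\).

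For the equality characterization, the "if" direction is a direct induction. If \(|N[J]|=C|J|+m\) for every nonempty \(J\subseteq I\), then every inequality above is an equality, so \(b(J)=P_{|J|}\) for all such \(J\), and in particular \(b(I)=P_k\). For the "only if" direction, suppose \(b(I)=P_k\). The chain above is a sum of nonnegative slacks, so each must vanish. First, \(|N[I]|=Ck+m\). Second, \(\sum_v b(I\setminus\{v\})=kP_{k-1}\), and since each term is at least \(P_{k-1}\), this forces \(b(I\setminus\{v\})=P_{k-1}\) for every \(v\in I\). By the induction hypothesis applied to each \(I\setminus\{v\}\), we get \(|N[J]|=C|J|+m\) for every nonempty \(J\subseteq I\setminus\{v\}\). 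Every nonempty proper subset of \(I\) lies in some \(I\setminus\{v\}\), and \(J=I\) is covered by the first condition, so the equality condition holds for all nonempty \(J\subseteq I\).

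The main point requiring care is this propagation of equality down the recursion. In particular, the equality \(b(I\setminus\{v\})=P_{k-1}\) must hold for \emph{all} \(v\), not merely on average, which uses the fact that every term is already bounded below by \(P_{k-1}\). The lower bound itself is routine once hereditariness of the hypothesis is noted.
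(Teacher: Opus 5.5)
Your proof is correct and follows the same route as the paper: induction on $k$ via the recursion, applying the hereditary hypothesis to each $I\setminus\{v\}$, and propagating equality down to every nonempty proper subset. Your equality argument is slightly more explicit than the paper's "equality must hold at every step": you spell out why $b(I\setminus\{v\})=P_{k-1}$ must hold for each $v$ rather than only on average.
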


\begin{proof}
We proceed by induction on \(k\).

If \(k=1\), then \(|N[v]|\le C+m\), and therefore
\[
b(\{v\})\ge \frac1{C+m}.
\]

Now let \(k\ge2\). By the induction hypothesis,
\[
b(I\setminus\{v\})
\ge
\prod_{j=1}^{k-1}
\frac{j}{Cj+m}
\]
for every \(v\in I\). Since \(|N[I]|\le Ck+m\),
\[
b(I)
=
\frac1{|N[I]|}
\sum_{v\in I} b(I\setminus\{v\})
\ge
\frac{k}{Ck+m}
\prod_{j=1}^{k-1}
\frac{j}{Cj+m}
=
\prod_{j=1}^{k}
\frac{j}{Cj+m}.
\]

If \(|N[J]|=C|J|+m\) for every nonempty \(J\subseteq I\), then every inequality above is an equality.

Conversely, suppose equality holds. Then equality must hold at every step of the induction. In particular,
\[
|N[I]|=Ck+m
\]
and
\[
b(I\setminus\{v\})
=
\prod_{j=1}^{k-1}
\frac{j}{Cj+m}
\qquad
(v\in I).
\]
By the induction hypothesis,
\[
|N[J]|=C|J|+m
\]
for every nonempty proper subset \(J\subset I\). Together with \(|N[I]|=Ck+m\), this proves the characterization.
\end{proof}

\begin{corollary}[Universal lower bound]
\label{cor:b_universal_lower}
Let \(G\) be a connected graph on \(n\) vertices, and let \(I\) be a nonempty independent set of size \(k\). Then
\[
b(I)\ge \binom{n}{k}^{-1}.
\]
Moreover, equality holds if and only if every vertex of \(I\) is adjacent to every vertex of \(V\setminus I\).
\end{corollary}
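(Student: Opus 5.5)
We apply Proposition~\ref{prop:b_lower} with \(C=1\) and \(m=n-k\). The bound \(|N[J]|\le |J|+(n-k)\) holds for every nonempty \(J\subseteq I\). Indeed, \(N[J]\subseteq J\cup(V\setminus I)\), because \(I\) is independent and so \(J\) has no neighbours inside \(I\setminus J\). This gives \(|N[J]|\le |J|+(n-k)\). Proposition~\ref{prop:b_lower} then yields
\[
b(I)\ge\prod_{j=1}^{k}\frac{j}{j+n-k}=\frac{k!\,(n-k)!}{n!}=\binom{n}{k}^{-1},
\]
where the product identity is a telescoping of factorials: the denominators run over \(n-k+1,\dots,n\).

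For the equality case we use the equality clause of Proposition~\ref{prop:b_lower}. Equality holds if and only if \(|N[J]|=|J|+n-k\) for every nonempty \(J\subseteq I\). By the containment above, this is equivalent to \(N[J]=J\cup(V\setminus I)\), that is, \(N(J)=V\setminus I\), for every nonempty \(J\).

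Suppose every vertex of \(I\) is adjacent to all of \(V\setminus I\). Then each nonempty \(J\) has \(N(J)=V\setminus I\), so equality holds. Conversely, suppose equality holds. Taking \(J=\{v\}\) gives \(N(v)=V\setminus I\) for each \(v\in I\), which is the stated condition. Hence only the singleton subsets matter, and the condition for all \(J\) follows from them.

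The main care point is the containment \(N[J]\subseteq J\cup(V\setminus I)\), which is where independence of \(I\) is used. The edge case \(k=n\) cannot occur for connected \(G\) with \(n\ge2\), since an independent set cannot be all of \(V\). If \(n=1\), then \(k=1\), \(b=1=\binom11^{-1}\), and the adjacency condition holds vacuously. Otherwise the argument is a direct specialization.
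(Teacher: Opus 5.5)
Your proposal is correct and follows essentially the same route as the paper: the containment $N[J]\subseteq J\cup(V\setminus I)$ (the paper writes it as $N[J]\subseteq V\setminus(I\setminus J)$), Proposition~\ref{prop:b_lower} with $C=1$, $m=n-k$, and reduction of the equality case to the singletons $J=\{u\}$. The only difference is that you spell out the converse direction of the equality characterization (full adjacency to $V\setminus I$ forces $|N[J]|=|J|+n-k$ for every nonempty $J$), which the paper leaves implicit.
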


\begin{proof}
If \(J\subseteq I\) is nonempty, then no vertex of \(I\setminus J\) belongs to \(N[J]\), because \(I\) is independent. Hence
\[
N[J]\subseteq V\setminus (I\setminus J),
\]
and therefore
\[
|N[J]|
\le
n-(k-|J|)
=
|J|+n-k.
\]

Applying Proposition~\ref{prop:b_lower} with \(C=1\) and \(m=n-k\) gives
\[
b(I)
\ge
\prod_{j=1}^{k}
\frac{j}{j+n-k}
=
\frac{k!(n-k)!}{n!}
=
\binom{n}{k}^{-1}.
\]

Equality requires
\[
|N[J]|=|J|+n-k
\]
for every nonempty subset \(J\subseteq I\). Taking \(J=\{u\}\) yields
\[
|N[u]|=n-k+1.
\]
Since \(u\) has no neighbours in \(I\), this is equivalent to
\[
N[u]=\{u\}\cup(V\setminus I),
\]
which proves the characterization.
\end{proof}

\begin{remark}
\label{rem:two_sided_bounds}
Combining Theorem~\ref{thm:b_upper} and Corollary~\ref{cor:b_universal_lower}, every nonempty independent set of size \(k\) in a connected graph on \(n\ge2\) vertices satisfies
\[
\binom{n}{k}^{-1}
\le
b(I)
\le
\frac1{k+1}.
\]
The upper bound is attained when all vertices of \(I\) share a single boundary vertex, while the lower bound is attained when every vertex of \(I\) is adjacent to every vertex outside \(I\).
\end{remark}

\subsection{Neighbourhood growth regimes}

The bounds of Theorem~\ref{thm:b_upper} and
Proposition~\ref{prop:b_lower} suggest that the behaviour of
\(b(I)\) is governed by the growth of the closed neighbourhoods
\(N[J]\) as \(J\) ranges over subsets of \(I\).
We now examine several representative growth regimes in which
the recursion for \(b(I)\) simplifies or yields explicit
estimates. These include linear neighbourhood growth,
multiplicative expansion, and the extremal case of pairwise
disjoint closed neighbourhoods.

\subsubsection{Linear neighbourhood growth}
\label{subsec:linear_growth}

The lower bound of Proposition~\ref{prop:b_lower} becomes exact whenever the closed neighbourhoods of subsets of \(I\) grow linearly with their size. This occurs naturally in complete multipartite graphs.

\begin{proposition}
\label{prop:multipartite}
Let
\(
G=K_{n_1,\dots,n_r}
\)
be a complete multipartite graph, and let \(I\) be an independent set contained in a part \(V_i\) of size \(n_i\). Define
\[
k:=|I|,
\qquad
M:=n-n_i.
\]
Then
\[
b(I)=\frac{k!\,M!}{(M+k)!}.
\]
\end{proposition}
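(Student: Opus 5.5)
The plan is to compute the closed neighbourhoods of subsets of \(I\) exactly and then apply the equality case of Proposition~\ref{prop:b_lower} with \(C=1\) and \(m=M\).

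\emph{Neighbourhoods.} Let \(J\subseteq I\) be nonempty. In \(K_{n_1,\dots,n_r}\) a vertex of \(V_i\) is adjacent to precisely the vertices outside \(V_i\). Hence \(N(J)=V\setminus V_i\), which has size \(M\), and
\[
|N[J]| = |J| + M .
\]
There is one degenerate case. If \(r=1\), then \(G\) has no edges and \(M=0\). Connectivity then forces \(n=1\), so \(k\le 1\) and the identity still holds trivially. For \(r\ge 2\) the formula is exact for every nonempty \(J\subseteq I\).

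\emph{Evaluating \(b(I)\).} The hypothesis of Proposition~\ref{prop:b_lower} holds with \(C=1\) and \(m=M\ge 0\), and in fact with equality for every nonempty \(J\subseteq I\). The equality clause of that proposition therefore gives
\[
b(I)=\prod_{j=1}^{k}\frac{j}{j+M}.
\]
The numerator telescopes to \(k!\). The denominator is \((M+1)(M+2)\cdots(M+k)=(M+k)!/M!\). Together these give the claimed value \(k!\,M!/(M+k)!\).

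As a fallback that does not rely on the proposition, one can run a direct induction on \(k\) using the recursion \eqref{eq:def_b_rec}. Write \(b_k\) for the common value of \(b(J)\) over \(|J|=k\). This is well defined because \(|N[J]|\) depends only on \(|J|\). The recursion then reads \(b_k=\frac{k}{k+M}\,b_{k-1}\) with \(b_0=1\), which is the same product.

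There is no real obstacle here. The only points requiring care are justifying \(N(J)=V\setminus V_i\) for every nonempty \(J\), including when \(J\) is a single vertex, and the degenerate \(r=1\) case.
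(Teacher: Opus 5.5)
Your proposal is correct and follows the paper's proof essentially verbatim: you show $|N[J]|=|J|+M$ for every nonempty $J\subseteq I$ and then invoke the equality case of Proposition~\ref{prop:b_lower} with $C=1$ and $m=M$. Your separate treatment of $r=1$ is harmless but unnecessary, because there $N(J)=V\setminus V_i=\varnothing$ holds as well, so the general computation already covers that case without appealing to connectivity.
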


\begin{proof}
Since \(I\subseteq V_i\), every vertex of \(I\) is adjacent to every vertex outside \(V_i\). Hence for every nonempty subset \(J\subseteq I\),
\[
N[J]
=
J\cup (V\setminus V_i),
\]
and therefore
\[
|N[J]|=|J|+M.
\]

Thus Proposition~\ref{prop:b_lower} applies with equality for \(C=1\) and \(m=M\). Consequently,
\[
b(I)
=
\prod_{j=1}^{k}
\frac{j}{j+M}
=
\frac{k!\,M!}{(M+k)!}.
\]
\end{proof}

\subsubsection{Multiplicative neighbourhood growth}
\label{subsec:multiplicative_growth}

We now consider independent sets whose closed neighbourhoods expand proportionally to their size.

For \(c>1\), we say that an independent set \(I\) is \emph{\(c\)-expanding} if
\[
|N[J]|\ge c|J|
\]
for every nonempty subset \(J\subseteq I\).

\begin{theorem}
\label{thm:expansion}
Let \(I\) be a \(c\)-expanding independent set of size \(k\ge1\), where \(c>1\). Then
\[
b(I)
\le
\prod_{j=1}^{k}
\frac{j}{\lceil cj\rceil}
\le
c^{-k}.
\]
Moreover, equality in the second bound holds if and only if \(c\) is an integer, \(|N[v]|=c\) for every \(v\in I\), and the closed neighbourhoods \(\{N[v]\}_{v\in I}\) are pairwise disjoint.
\end{theorem}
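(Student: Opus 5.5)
Both bounds come from an induction on \(k\) that mirrors the proof of Theorem~\ref{thm:b_upper}. Since \(|N[J]|\) is an integer, the \(c\)-expanding hypothesis \(|N[J]|\ge cJ\) upgrades to \(|N[J]|\ge\lceil c|J|\rceil\) for every nonempty \(J\subseteq I\). Every nonempty subset of a \(c\)-expanding set is again \(c\)-expanding, so the induction hypothesis applies to each \(I\setminus\{v\}\).

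For \(k=1\) we have \(b(\{v\})=1/|N[v]|\le 1/\lceil c\rceil\). For \(k\ge2\), the recursion \eqref{eq:def_b_rec} together with the induction hypothesis gives
\[
b(I)=\frac{1}{|N[I]|}\sum_{v\in I}b(I\setminus\{v\})\le \frac{k}{\lceil ck\rceil}\prod_{j=1}^{k-1}\frac{j}{\lceil cj\rceil}.
\]
This is the first inequality. The second inequality is termwise: \(\lceil cj\rceil\ge cj\) gives \(j/\lceil cj\rceil\le 1/c\) for each \(j\).

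For the equality case, the "if" direction is direct. If \(c\) is an integer, \(|N[v]|=c\) for every \(v\in I\), and the closed neighbourhoods are pairwise disjoint, then \(|N[J]|=c|J|\) for every nonempty \(J\subseteq I\). Proposition~\ref{prop:b_lower} with \(C=c\) and \(m=0\) then gives \(b(I)=\prod_j j/(cj)=c^{-k}\).

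For the "only if" direction, suppose \(b(I)=c^{-k}\). Then both displayed inequalities must be tight.
\begin{itemize}
\item Tightness of the second forces \(\lceil cj\rceil=cj\) for all \(j\le k\); taking \(j=1\) shows \(c\) is an integer.
\item Tightness of the first, tracked through the induction, forces \(|N[J]|=c|J|\) at every level. The key structural point is that equality at the top step already requires \(b(I\setminus\{v\})\) to attain its bound for every \(v\). Inducting downward, every nonempty \(J\subseteq I\) must satisfy \(|N[J]|=c|J|\), exactly as in the converse part of Proposition~\ref{prop:b_lower}.
\item Applying this to singletons gives \(|N[v]|=c\). Applying it to pairs gives \(|N[\{u,v\}]|=2c=|N[u]|+|N[v]|\), so \(N[u]\cap N[v]=\varnothing\) for \(u\ne v\), which is pairwise disjointness.
\end{itemize}

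The main obstacle is this bookkeeping of equality through the induction, because the equality claim concerns only the second bound \(c^{-k}\). I would handle it by first recording that \(b(I)\le\prod_j j/\lceil cj\rceil\le c^{-k}\) holds for every nonempty subset of \(I\). Then I would observe that \(b(I)=c^{-k}\) forces \(|N[I]|=ck\) and \(b(I\setminus\{v\})=c^{-(k-1)}\) for all \(v\in I\), so the statement "\(b(J)=c^{-|J|}\) implies \(|N[J]|=c|J|\) for all nonempty \(J\)" can itself be proved by induction.
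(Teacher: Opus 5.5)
Your proposal is correct and follows essentially the same route as the paper: induction on $k$ through the recursion using $|N[I]|\ge\lceil ck\rceil$, a termwise comparison for the second bound, and tracking equality back through the induction. The only differences are minor and in your favour: you get sufficiency from Proposition~\ref{prop:b_lower} with $C=c$, $m=0$ instead of Proposition~\ref{prop:disjoint}, and you make the paper's terse ``iterating these equality conditions'' explicit, first by proving $|N[J]|=c|J|$ for every nonempty $J\subseteq I$ and then by reading off pairwise disjointness from $|N[\{u,v\}]|=2c=|N[u]|+|N[v]|$.
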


\begin{proof}
We proceed by induction on \(k\).

If \(k=1\), say \(I=\{v\}\), then \(|N[v]|\ge c\). Since \(|N[v]|\) is an integer,
\[
|N[v]|\ge \lceil c\rceil,
\]
and therefore
\[
b(I)
=
\frac1{|N[v]|}
\le
\frac1{\lceil c\rceil}.
\]

Now let \(k\ge2\), and assume the result holds for all \(c\)-expanding independent sets of size \(k-1\). For every \(v\in I\), the set \(I\setminus\{v\}\) remains \(c\)-expanding, so
\[
b(I\setminus\{v\})
\le
\prod_{j=1}^{k-1}
\frac{j}{\lceil cj\rceil}.
\]

Since \(|N[I]|\ge ck\) and \(|N[I]|\) is an integer,
\[
|N[I]|
\ge
\lceil ck\rceil.
\]
Using the recursion,
\[
b(I)
=
\frac1{|N[I]|}
\sum_{v\in I}
b(I\setminus\{v\})
\le
\frac{k}{\lceil ck\rceil}
\prod_{j=1}^{k-1}
\frac{j}{\lceil cj\rceil}
=
\prod_{j=1}^{k}
\frac{j}{\lceil cj\rceil}.
\]

Finally, since \(\lceil cj\rceil\ge cj\) for every \(j\),
\[
\prod_{j=1}^{k}
\frac{j}{\lceil cj\rceil}
\le
\prod_{j=1}^{k}
\frac{j}{cj}
=
c^{-k}.
\]

Suppose first that \(c\in\mathbb Z\), that \(|N[v]|=c\) for every \(v\in I\), and that the closed neighbourhoods of vertices of \(I\) are pairwise disjoint. Then Proposition~\ref{prop:disjoint} gives
\[
b(I)
=
\prod_{v\in I}
\frac1{|N[v]|}
=
c^{-k}.
\]

Conversely, suppose \(b(I)=c^{-k}\). Then equality must hold throughout the preceding argument. In particular,
\[
|N[I]|=ck
\]
and
\[
b(I\setminus\{v\})
=
c^{-(k-1)}
\qquad
(v\in I).
\]
Iterating these equality conditions through the induction yields the stated characterization.
\end{proof}

\begin{corollary}
\label{cor:cycle_bound}
Let \(G=C_n\) with \(n\ge3\), and let \(I\) be an independent set of size \(k\). Then
\(
b(I)\le 2^{-k}.
\)
\end{corollary}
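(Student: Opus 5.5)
The plan is to apply Theorem~\ref{thm:expansion} with \(c=2\). This requires checking that every independent set \(I\) of \(C_n\) is \(2\)-expanding, i.e.\ that \(|N[J]|\ge 2|J|\) for every nonempty \(J\subseteq I\). Since every subset of an independent set is independent, it suffices to prove that \(|N[J]|\ge 2|J|\) for every nonempty independent set \(J\) of \(C_n\). Once this holds, Theorem~\ref{thm:expansion} gives
\[
b(I)\le \prod_{j=1}^{k}\frac{j}{\lceil 2j\rceil}=\prod_{j=1}^{k}\frac{1}{2}=2^{-k}.
\]
The case \(k=0\) is trivial, since \(b(\varnothing)=1=2^{0}\).

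For the expansion estimate, label the vertices \(0,1,\dots,n-1\) cyclically and let \(J\) be a nonempty independent set. Define the map \(\phi:J\to V\setminus J\) by \(\phi(v)=v+1 \pmod n\). Because \(J\) is independent, \(v+1\notin J\), so \(\phi\) does land in \(V\setminus J\). It also lands in \(N(J)\), and it is injective because it is a restriction of a rotation. Hence \(|N(J)|\ge |J|\), and therefore
\[
|N[J]|=|J|+|N(J)|\ge 2|J|.
\]
This needs \(n\ge3\), so that \(C_n\) is a simple cycle in which \(v+1\) is a genuine neighbour of \(v\) distinct from \(v\).

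No step here is a real obstacle; the only point needing care is the direction of the inequality. Theorem~\ref{thm:expansion} bounds \(b\) from above under a lower bound on neighbourhood growth, and the rotation injection supplies exactly that lower bound. As a sanity check, for \(J=\{v\}\) we get \(|N[v]|=3\ge2\), which is consistent with the bound.
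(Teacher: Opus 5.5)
Your proposal is correct and follows essentially the same route as the paper. Both arguments use the successor map \(v\mapsto v+1\), which is injective and, by independence, lands outside \(J\), to get \(|N[J]|\ge 2|J|\); this shows \(I\) is \(2\)-expanding, and Theorem~\ref{thm:expansion} with \(c=2\) then gives the bound, with your separate treatment of \(k=0\) a harmless extra detail.
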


\begin{proof}
Fix a cyclic orientation of \(C_n\), and let \(v^+\) denote the successor of \(v\).

For every nonempty subset \(J\subseteq I\), the vertices
\(
\{v,v^+ : v\in J\}
\)
are pairwise distinct, since \(J\) is independent. Consequently,
\(
|N[J]|\ge 2|J|.
\)
Thus \(I\) is \(2\)-expanding, and the result follows from Theorem~\ref{thm:expansion}.
\end{proof}

\subsubsection{Disjoint closed neighbourhoods}
\label{subsec:disjoint_neighbourhoods}

We now consider the extremal situation in which the closed neighbourhoods of distinct vertices of an independent set are pairwise disjoint. 

\begin{proposition}
\label{prop:disjoint}
Let \(G=(V,E)\) be a graph, and let \(I\subseteq V\) be an independent set satisfying
\[
N[u]\cap N[v]=\varnothing
\qquad
(u\neq v,\;u,v\in I).
\]
Then
\[
b(I)
=
\prod_{v\in I}
\frac1{|N[v]|}.
\]
\end{proposition}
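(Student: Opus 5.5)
The plan is to argue by induction on \(k=|I|\), using only the recursion \eqref{eq:def_b_rec}, and to isolate one combinatorial identity as the key step.

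\emph{Consequences of disjointness.} Disjointness of the closed neighbourhoods makes the neighbourhood sizes additive. For every nonempty \(J\subseteq I\),
\[
|N[J]|=\sum_{v\in J}|N[v]|.
\]
Write \(d_v:=|N[v]|\). Every subset of \(I\) inherits the disjointness hypothesis, so the induction hypothesis applies to each \(I\setminus\{v\}\).

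\emph{Base case.} If \(k=0\), then \(b(\varnothing)=1\), which is the empty product. If \(k=1\), the recursion gives \(b(\{v\})=1/d_v\).

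\emph{Inductive step.} Assume the formula holds for all smaller sets. Substituting into the recursion gives
\[
b(I)=\frac{1}{\sum_{u\in I}d_u}\sum_{v\in I}\prod_{w\in I\setminus\{v\}}\frac1{d_w}.
\]
Factor \(\prod_{w\in I}1/d_w\) out of the sum. The summand for \(v\) then becomes \(d_v\), so the sum equals \(\bigl(\prod_{w\in I}1/d_w\bigr)\sum_{v\in I}d_v\). This cancels the prefactor \(1/\sum_{u\in I}d_u\) exactly, leaving
\[
b(I)=\prod_{v\in I}\frac1{d_v}.
\]

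\emph{Where the content lies.} The only step with real content is the cancellation identity \(\sum_v d_v\prod_{w\ne v}d_w^{-1}=\bigl(\sum_v d_v\bigr)\prod_w d_w^{-1}\). It works because \(|N[I]|\) equals \(\sum_v d_v\) exactly, and that equality is precisely where disjointness is used. No other obstacle is expected.

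\emph{Cross-check.} The same formula can be read off the sum representation \eqref{eq:def_b_sum}. It is the classical identity
\[
\sum_{\rho}\prod_j\frac1{d_{\rho_j}+\cdots+d_{\rho_k}}=\prod_v\frac1{d_v},
\]
the probability that independent exponential clocks with rates \(d_v\) ring in a given order, summed over all orders. The induction above is simply its proof.
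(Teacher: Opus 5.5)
Your proof is correct and is essentially the paper's own argument. Both use induction on \(|I|\), the additivity \(|N[I]|=\sum_{v\in I}|N[v]|\) that follows from disjointness, the induction hypothesis applied to each \(I\setminus\{v\}\), and cancellation of the resulting sum \(\sum_{v\in I}|N[v]|\); your exponential-clock cross-check is a valid extra sanity check but is not needed.
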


\begin{proof}
We proceed by induction on \(|I|\).

If \(I=\varnothing\), then \(b(\varnothing)=1\), which agrees with the empty product convention.

Now assume the statement holds for all independent sets of size less than \(k\), and let \(|I|=k\ge1\).

Since the closed neighbourhoods of distinct vertices of \(I\) are pairwise disjoint,
\[
|N[I]|
=
\sum_{u\in I}|N[u]|.
\]

Applying the recursion for \(b(I)\),
\[
b(I)
=
\frac1{|N[I]|}
\sum_{v\in I}
b(I\setminus\{v\}).
\]

For each \(v\in I\), the set \(I\setminus\{v\}\) still satisfies the disjointness hypothesis. Hence, by the induction hypothesis,
\[
b(I\setminus\{v\})
=
\prod_{u\in I\setminus\{v\}}
\frac1{|N[u]|}
=
\left(
\prod_{u\in I}
\frac1{|N[u]|}
\right)
|N[v]|.
\]

Substituting into the recursion yields
\[
b(I)
=
\frac1{\sum_{u\in I}|N[u]|}
\left(
\prod_{u\in I}
\frac1{|N[u]|}
\right)
\sum_{v\in I}|N[v]|.
\]

The two sums cancel, giving
\[
b(I)
=
\prod_{u\in I}
\frac1{|N[u]|}.
\]
\end{proof}

\begin{corollary}
\label{cor:dregular}
Let \(G\) be a \(d\)-regular graph, and let \(I\) be an independent set whose vertices are pairwise at distance at least three. Then
\[
b(I)=(d+1)^{-|I|}.
\]
\end{corollary}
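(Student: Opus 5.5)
The plan is to reduce Corollary~\ref{cor:dregular} directly to Proposition~\ref{prop:disjoint}. Two things must be checked: that the distance hypothesis gives pairwise disjoint closed neighbourhoods, and that \(d\)-regularity makes every factor in the product the same.

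First, we verify the disjointness hypothesis. Take distinct \(u,v\in I\) and suppose some vertex \(w\) lies in \(N[u]\cap N[v]\). Then \(\operatorname{dist}(u,w)\le 1\) and \(\operatorname{dist}(w,v)\le 1\). By the triangle inequality, \(\operatorname{dist}(u,v)\le 2\), which contradicts the assumption that vertices of \(I\) are pairwise at distance at least three. Hence \(N[u]\cap N[v]=\varnothing\) for all distinct \(u,v\in I\). Independence of \(I\) also follows automatically, since adjacent vertices are at distance one.

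Second, we apply Proposition~\ref{prop:disjoint}, which gives \(b(I)=\prod_{v\in I}|N[v]|^{-1}\). Since \(G\) is \(d\)-regular, each \(v\) has exactly \(d\) neighbours, none equal to \(v\), so \(|N[v]|=d+1\) for every \(v\in I\). The product therefore equals \((d+1)^{-|I|}\).

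The case \(I=\varnothing\) is consistent with the convention \(b(\varnothing)=1=(d+1)^0\). I expect no real obstacle here. The only point needing care is the triangle-inequality step, which shows that distance at least three, rather than at least two, is exactly what forces the closed neighbourhoods to be disjoint.
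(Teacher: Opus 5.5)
Your proposal is correct and follows the paper's proof. The paper likewise uses the distance hypothesis to get pairwise disjoint closed neighbourhoods, then applies Proposition~\ref{prop:disjoint} with \(|N[v]|=d+1\). Your triangle-inequality argument spells out the disjointness step, which the paper asserts without justification.
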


\begin{proof}
If distinct vertices of \(I\) are at distance at least three, then their closed neighbourhoods are disjoint. Since \(G\) is \(d\)-regular,
\[
|N[v]|=d+1
\qquad
(v\in I).
\]
The result follows immediately from Proposition~\ref{prop:disjoint}.
\end{proof}

\begin{corollary}
\label{cor:cycle_exact}
Let \(G=C_n\), and let \(I\) be an independent set whose vertices are pairwise at distance at least three along the cycle. Then
\[
b(I)=3^{-|I|}.
\]
\end{corollary}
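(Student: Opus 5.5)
This is a direct specialisation of Corollary~\ref{cor:dregular}, so the plan is to check its two hypotheses for \(G=C_n\). We may assume \(I\neq\varnothing\), since for \(I=\varnothing\) we have \(b(\varnothing)=1=3^0\) by definition.

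First, regularity. The cycle \(C_n\) with \(n\ge3\) is \(2\)-regular, so every closed neighbourhood has size \(|N[v]|=3\). In the degenerate cases \(n\le 2\) the cycle is not defined as a simple graph, and the convention \(n\ge3\) is implicit in the statement.

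Second, the distance condition. The statement assumes that the vertices of \(I\) are pairwise at distance at least three along the cycle, and in \(C_n\) the graph distance is exactly the cyclic distance. The hypothesis of Corollary~\ref{cor:dregular} is therefore met verbatim.

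Applying Corollary~\ref{cor:dregular} with \(d=2\) then gives \(b(I)=(2+1)^{-|I|}=3^{-|I|}\). The only point needing care is the claim, used inside the cited corollary, that distance at least three makes the closed neighbourhoods \(N[u]=\{u^-,u,u^+\}\) and \(N[v]\) disjoint. If the two sets shared a vertex \(w\), then \(u\) and \(v\) would be joined through \(w\) by a path of length at most two, which is a contradiction.

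Alternatively, one can bypass Corollary~\ref{cor:dregular} and apply Proposition~\ref{prop:disjoint} directly, using \(|N[v]|=3\) for each \(v\in I\). I expect no real obstacle here: the proof is a two-line verification of hypotheses.
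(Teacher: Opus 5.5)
Your proposal is correct and matches the paper's proof: both note that $C_n$ is $2$-regular, so $|N[v]|=3$, and then invoke Corollary~\ref{cor:dregular}. Your extra check that distance at least three forces the closed neighbourhoods to be disjoint is the same step the paper carries out inside that corollary before applying Proposition~\ref{prop:disjoint}.
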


\begin{proof}
Every vertex of \(C_n\) has degree \(2\), so
\[
|N[v]|=3
\qquad
(v\in V(C_n)).
\]
The result follows from Corollary~\ref{cor:dregular}.
\end{proof}

\begin{remark}[Comparison of neighbourhood-growth regimes]
The preceding results identify three representative neighbourhood-growth regimes. If the corresponding condition holds for every nonempty subset \(J\subseteq I\), then \(b(I)\) satisfies:
\[
\begin{array}{lll}
\text{Linear growth }(M\ge0)
&
|N[J]|=|J|+M
&
b(I)=\dfrac{|I|!\,M!}{(|I|+M)!},
\\[1.5ex]
\text{Multiplicative expansion }(c>1)
&
|N[J]|\ge c|J|
&
b(I)\le c^{-|I|},
\\[1.5ex]
\text{Disjoint neighbourhoods}
&
|N[J]|=\displaystyle\sum_{v\in J}|N[v]|
&
b(I)=\displaystyle\prod_{v\in I}\frac1{|N[v]|}.
\end{array}
\]
\end{remark}

\subsection{Fully regular graphs} \label{subsec:fully_regular} 

We now consider the special case of fully regular graphs. In this setting, both \(b(I)\) and \(\sigma(G)\) admit explicit closed forms.
Recall that a graph \(G\) is \emph{fully regular} if, for every independent set \(I\subseteq V\), the quantity
\(
a(I)=|V\setminus N[I]|
\)
depends only on \(|I|\). Let \(\alpha=\alpha(G)\) denote the independence number of \(G\). Then there exist constants
\(
a_0,a_1,\dots,a_\alpha
\)
such that
\(
a(I)=a_{|I|}
\)
for every independent set \(I\). In particular, \(a_0=n\), and \(a_\alpha=0\).

We first recall the following enumeration formula for independent sets in fully regular graphs, due to Fang et al.~\cite{fang2023}.

\begin{lemma}
\label{lem:ind_set_count}
For each \(0\le i\le \alpha\), the number of independent sets of size \(i\) is
\[
\frac{a_0a_1\cdots a_{i-1}}{i!}.
\]
\end{lemma}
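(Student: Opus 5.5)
We count ordered independent sets and then divide by \(i!\), since each independent set of size \(i\) arises from exactly \(i!\) orderings. It therefore suffices to show that the number \(T_i\) of sequences \((v_1,\dots,v_i)\) of distinct vertices with \(\{v_1,\dots,v_i\}\) independent equals \(a_0a_1\cdots a_{i-1}\). We argue by induction on \(i\), building such sequences one vertex at a time.

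The base case is \(i=0\). There is exactly one empty sequence and the empty product equals \(1\), so \(T_0=1\). (For \(i=1\) the formula gives \(a_0=n\), which is the number of vertices, as it should.)

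For the inductive step, suppose \(i\ge1\) and fix an independent sequence \((v_1,\dots,v_{i-1})\), and put \(J=\{v_1,\dots,v_{i-1}\}\). A vertex \(v_i\) extends this to an independent sequence of length \(i\) exactly when \(v_i\notin J\) and \(v_i\) is not adjacent to any vertex of \(J\). That is, \(v_i\in V\setminus N[J]\). The number of admissible choices is therefore \(a(J)\). By full regularity, \(a(J)=a_{i-1}\), which depends only on \(|J|=i-1\) and not on which set \(J\) was chosen. Hence \(T_i=T_{i-1}\,a_{i-1}\), and by induction \(T_i=a_0a_1\cdots a_{i-1}\). Dividing by \(i!\) gives the claimed count for every \(0\le i\le\alpha\).

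The only point needing care is that the number of extensions is the same for every prefix, not merely on average. This uniformity is exactly what the fully regular hypothesis supplies: \(a(J)\) is constant over all independent \(J\) of a given size. The rest is routine.
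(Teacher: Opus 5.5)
Your proof is correct and matches the paper's: you count ordered sequences, use full regularity so that every independent prefix of size \(j-1\) has exactly \(a_{j-1}\) extensions, and then divide by \(i!\). The explicit induction and your remark that the extension count is the same for every prefix only spell out the paper's one-line counting step.
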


\begin{proof}
An independent set of size \(i\) may be constructed sequentially by choosing vertices
\(
v_1,\dots,v_i
\)
such that \(v_j\) lies outside the closed neighbourhood of
\(
\{v_1,\dots,v_{j-1}\}.
\)
By full regularity, there are \(a_{j-1}\) available choices at step \(j\). Hence there are
\(
a_0a_1\cdots a_{i-1}
\)
ordered constructions. Since every independent set is counted once for each of its \(i!\) orderings, the result follows.
\end{proof}

\begin{theorem}
\label{thm:fully_regular_b}
Let \(G\) be fully regular with parameters
\(
a_0,a_1,\dots,a_\alpha.
\)
If \(I\) is an independent set of size \(k\), then
\[
b(I)
=
k!
\prod_{j=1}^{k}
\frac1{a_0-a_j}.
\]
\end{theorem}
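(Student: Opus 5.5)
The plan is to prove the formula by induction on \(k=|I|\), using the recursion \eqref{eq:def_b_rec} together with full regularity, i.e.\ the fact that \(|N[J]|=n-a(J)=a_0-a_{|J|}\) depends only on \(|J|\) for every independent set \(J\).

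The base case \(k=0\) holds trivially: the formula gives \(0!\cdot(\text{empty product})=1=b(\varnothing)\).

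For the inductive step, let \(|I|=k\ge1\). Every subset of an independent set is independent, so each \(I\setminus\{v\}\) with \(v\in I\) is an independent set of size \(k-1\). The induction hypothesis then gives
\[
b(I\setminus\{v\})=(k-1)!\prod_{j=1}^{k-1}\frac{1}{a_0-a_j},
\]
and this value is the same for all \(v\in I\). Substituting into the recursion, with \(n-a(I)=a_0-a_k\), yields
\[
b(I)=\frac{1}{a_0-a_k}\cdot k\cdot (k-1)!\prod_{j=1}^{k-1}\frac{1}{a_0-a_j}=k!\prod_{j=1}^{k}\frac1{a_0-a_j},
\]
which completes the induction.

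As an alternative route, one could use the sum representation \eqref{eq:def_b_sum}. For \(\rho\in S_I\), each suffix \(\{\rho_j,\dots,\rho_k\}\) has size \(k-j+1\), so every summand equals \(\prod_{j=1}^{k}1/(a_0-a_j)\) after reindexing. Since there are \(k!\) permutations, this gives the same result.

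No step is a genuine obstacle. The only points needing care are that independence is inherited by subsets, so the full-regularity hypothesis applies to every set appearing in the recursion, and that \(a_0=n\), so the denominators match. These denominators are positive, as already noted after \eqref{eq:def_b_rec}.
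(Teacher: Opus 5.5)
Your proof is correct. Your alternative route is exactly the paper's argument: every summand of \eqref{eq:def_b_sum} equals $\prod_{j=1}^{k}1/(a_0-a_j)$, and there are $k!$ permutations. Your primary induction on \eqref{eq:def_b_rec} is the same computation unrolled, with the minor advantage that it works directly from the recursive definition used in the theorem statement rather than relying on its equivalence with the permutation-sum representation.
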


\begin{proof}
Let \(|I|=k\). From \eqref{eq:def_b_sum},
\[
b(I)
=
\sum_{\rho\in S_I}
\prod_{j=1}^{k}
\frac1{n-a(\{\rho_j,\dots,\rho_k\})}.
\]

For each \(j\), the set
\(
\{\rho_j,\dots,\rho_k\}
\)
has size \(k-j+1\). Since \(G\) is fully regular,
\[
a(\{\rho_j,\dots,\rho_k\})
=
a_{k-j+1}.
\]
Hence every term in the sum equals
\[
\prod_{j=1}^{k}
\frac1{a_0-a_{k-j+1}}
=
\prod_{j=1}^{k}
\frac1{a_0-a_j}.
\]

As there are \(k!\) permutations of \(I\), all contributing equally, we obtain
\[
b(I)
=
k!
\prod_{j=1}^{k}
\frac1{a_0-a_j}.
\]
\end{proof}

\begin{corollary}
\label{cor:fang_recovery}
Let \(G\) be a fully regular graph with parameters
\(
a_0,a_1,\dots,a_\alpha.
\)
Then
\[
\sigma(G)
=
a_0!
\sum_{i=0}^{\alpha}
\prod_{j=1}^{i}
\frac{-a_j}{a_0-a_j}.
\]
\end{corollary}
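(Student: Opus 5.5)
The plan is to substitute the closed forms already available into Theorem~\ref{thm:main} and group the independent sets by size. By full regularity every independent set \(I\) with \(|I|=i\) has \(a(I)=a_i\), and by Theorem~\ref{thm:fully_regular_b} it has
\[
b(I)=i!\prod_{j=1}^{i}\frac{1}{a_0-a_j}.
\]
Hence the summand \((-1)^{|I|}\frac{a(I)}{n}b(I)\) in \eqref{eqn:main} depends only on \(i=|I|\). Grouping by size and using Lemma~\ref{lem:ind_set_count}, which gives \(a_0a_1\cdots a_{i-1}/i!\) sets of size \(i\), we obtain
\[
\sigma(G)=n!\sum_{i=0}^{\alpha}\frac{a_0a_1\cdots a_{i-1}}{i!}\,(-1)^i\,\frac{a_i}{n}\,i!\prod_{j=1}^{i}\frac{1}{a_0-a_j}.
\]

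Next we simplify, using \(n=a_0\). The factors \(i!\) cancel. The factor \(a_0\) in the count cancels against the denominator \(n=a_0\), which leaves the product \(a_1\cdots a_{i-1}a_i=\prod_{j=1}^{i}a_j\). Combining this with \((-1)^i\) and the remaining product gives \(\prod_{j=1}^{i}\frac{-a_j}{a_0-a_j}\). The outer factor \(n!\) equals \(a_0!\), which is the claimed formula.

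The only step needing care is the \(i=0\) term, where the cancellation above is not literally available. There \(I=\varnothing\), so \(a(\varnothing)=n\) and \(b(\varnothing)=1\), and the term is \(n!\cdot\frac{n}{n}\cdot1=a_0!\). This matches the empty product, so I would check it separately to confirm the convention.

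I would also note that each denominator \(a_0-a_j=|N[I]|\) is at least \(1\), so every term is well defined. The top term \(i=\alpha\) vanishes because \(a_\alpha=0\), which is consistent with that set's contribution in \eqref{eqn:main}.
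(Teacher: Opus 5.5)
Your proposal is correct and follows essentially the same route as the paper: group the independent sets in Theorem~\ref{thm:main} by size, substitute the count from Lemma~\ref{lem:ind_set_count} and the closed form for \(b(I)\) from Theorem~\ref{thm:fully_regular_b}, and cancel using \(n=a_0\). Your separate check of the \(i=0\) term is a small extra piece of care that the paper leaves implicit.
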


\begin{proof}
Grouping the independent sets in Theorem~\ref{thm:main} according to their size gives
\[
\sigma(G)
=
a_0!
\sum_{i=0}^{\alpha}
(-1)^i
\sum_{\substack{I\in\mathcal I(G)\\|I|=i}}
\frac{a_i}{a_0}\,b(I).
\]

By Lemma~\ref{lem:ind_set_count}, the number of independent sets of size \(i\) is
\(
\frac{a_0a_1\cdots a_{i-1}}{i!},
\)
while Theorem~\ref{thm:fully_regular_b} yields
\[
b(I)
=
i!
\prod_{j=1}^{i}
\frac1{a_0-a_j}.
\]

Substituting these expressions gives
\[
\sigma(G)
=
a_0!
\sum_{i=0}^{\alpha}
(-1)^i
\left(
\frac{a_0a_1\cdots a_{i-1}}{i!}
\right)
\left(
\frac{a_i}{a_0}
\right)
\left(
i!
\prod_{j=1}^{i}
\frac1{a_0-a_j}
\right).
\]

Simplifying yields
\[
\sigma(G)
=
a_0!
\sum_{i=0}^{\alpha}
\prod_{j=1}^{i}
\frac{-a_j}{a_0-a_j},
\]
which is precisely the formula of Fang et al.~\cite{fang2023}.
\end{proof}

\section{The Successive Ordering Polynomial}
\label{sec:polynomial}

In this section we define the \textit{successive ordering polynomial} of a graph \(G\), a weighted generating function over independent sets that encodes the alternating-sum formula of Theorem~\ref{thm:main}. It may be viewed as a weighted independence polynomial, where the weight of each independent set is governed by its local neighbourhood structure. We also introduce a multivariate refinement that assigns a variable to each vertex.

\subsection{Definition and basic properties} \label{subsec:polynomial}

\begin{definition}
\label{def:successive-poly}
Let \(G=(V,E)\) be a finite graph with \(n=|V|\). For each independent set \(I\subseteq V\), define
\[
w(I):=\frac{a(I)}{n}\,b(I),
\]
where \(a(I)\) and \(b(I)\) are given by \eqref{eq:def_a} and \eqref{eq:def_b_rec}. 
The \emph{successive ordering polynomial} of \(G\) is
\[
P_G(x):=\sum_{\substack{I\subseteq V\\ I\ \mathrm{independent}}} w(I)\,x^{|I|}.
\]
\end{definition}

By construction, \(\deg P_G \le \alpha(G)\), and all coefficients are nonnegative rational numbers. 
When all weights are replaced by \(1\), the polynomial reduces to the independence polynomial of \(G\).

\begin{proposition}\label{prop:recover}
The number \(\sigma(G)\) of successive vertex orderings satisfies
\(
\sigma(G)=n!\,P_G(-1).
\)
Equivalently, \(P_G(-1)\) equals the probability that a uniformly random linear ordering of \(V\) is successive.
\end{proposition}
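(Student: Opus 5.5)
The plan is to substitute $x=-1$ directly into Definition~\ref{def:successive-poly} and compare the result termwise with Theorem~\ref{thm:main}. Evaluating the polynomial at $-1$ gives
\[
P_G(-1)=\sum_{\substack{I\subseteq V\\ I\ \mathrm{independent}}} (-1)^{|I|}\,\frac{a(I)}{n}\,b(I).
\]
This is exactly the sum appearing in \eqref{eqn:main}, so multiplying by $n!$ yields $\sigma(G)=n!\,P_G(-1)$.

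For the probabilistic reformulation, I will recall from Section~\ref{sec:proof} that $\sigma'(G)=\sigma(G)/n!$ is by definition the probability that a uniformly random $\pi\in\Omega$ is successive. Identity \eqref{eq:inc-exc}, combined with \eqref{eq:pr_bi}, shows that this probability equals the same alternating sum over independent sets. It is therefore equal to $P_G(-1)$. Alternatively, I can take $U=V$ in Proposition~\ref{prop:general_U}, since $G_V$ is precisely the event that $\pi$ is successive.

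The argument is purely formal, so I do not expect a real obstacle. The only point needing care is that Theorem~\ref{thm:main} is stated for connected graphs, whereas Definition~\ref{def:successive-poly} allows arbitrary finite graphs. I will therefore either assume connectivity in the proposition, as in the theorem, or observe that the proof in Section~\ref{sec:proof} never uses connectivity.

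That proof relies only on inclusion--exclusion over the bad events $B_v$ and on the computation of $\Pr(C_\rho)$. Neither step requires $G$ to be connected, so the identity holds for all finite graphs. For a disconnected graph both sides vanish, because no ordering is successive.
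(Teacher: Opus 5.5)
Your proposal is correct and matches the paper's proof, which simply observes that substituting $x=-1$ into the definition of $P_G$ reproduces the alternating sum of Theorem~\ref{thm:main}, equivalently \eqref{eq:inc-exc} combined with \eqref{eq:pr_bi}. Your remark that the argument of Section~\ref{sec:proof} never uses connectivity, so that both sides vanish for disconnected graphs, is also correct and agrees with how the paper later uses this fact in Proposition~\ref{prop:components}.
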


\begin{proof}
Immediate from Theorem~\ref{thm:main} and the definition of \(w(I)\).
\end{proof}

\subsection{Derivative enumeration}

\begin{theorem}\label{thm:derivative}
Let \(A_k\) denote the number of linear orderings \(\pi\) of \(V\) whose set of bad vertices has size exactly \(k\). Define \(F(x):=n!P_G(x)\). Then, for each \(k\ge0\),
\[
A_k=\frac{F^{(k)}(-1)}{k!}
\]
In particular, \(A_0=\sigma(G)\).
\end{theorem}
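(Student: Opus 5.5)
The plan is the standard ``exactly $k$'' inclusion--exclusion argument, recast as a Taylor expansion around $x=-1$. For a linear ordering $\pi$, write $\mathrm{Bad}(\pi):=\{v\in V:\pi\in B_v\}$. For every subset $I\subseteq V$ we have $\pi\in B_I$ exactly when $I\subseteq\mathrm{Bad}(\pi)$. Define
\[
N_I:=\#\{\pi:\ I\subseteq \mathrm{Bad}(\pi)\}=n!\Pr(B_I),
\qquad
S_j:=\sum_{|I|=j}N_I .
\]
By \eqref{eq:pr_bi}, $N_I=n!\,w(I)$ whenever $I$ is independent. If $I$ contains an edge, then $N_I=0$, because the events $B_u$ and $B_v$ for adjacent $u,v$ are mutually exclusive (as already observed in Section~\ref{sec:proof}). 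Consequently $F(x)=\sum_j S_j x^j$, with the sum running over all $j$, and not just over independent sets.

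Next I would double count $S_j$ by grouping orderings according to their bad set. An ordering with $|\mathrm{Bad}(\pi)|=m$ contributes to $N_I$ for exactly $\binom{m}{j}$ sets $I$ of size $j$. Hence
\[
S_j=\sum_{m}\binom{m}{j}A_m,
\qquad\text{and so}\qquad
F(x)=\sum_m A_m\sum_j\binom{m}{j}x^j=\sum_m A_m(1+x)^m .
\]
This identity is the key step.

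Once it is established, the conclusion is immediate. Substituting $y=1+x$ gives $F(y-1)=\sum_m A_m y^m$, so the Taylor coefficients of $F$ at $x=-1$ are exactly the $A_k$. That is, $A_k=F^{(k)}(-1)/k!$. Taking $k=0$ gives $A_0=F(-1)=\sigma(G)$, which is consistent with Proposition~\ref{prop:recover}. Equivalently, one can write $A_k=\sum_{j\ge k}(-1)^{j-k}\binom{j}{k}S_j$, the classical sieve formula, and check that this equals $F^{(k)}(-1)/k!$ by differentiating $\sum_j S_jx^j$ term by term.

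The main point needing care is the justification that $N_I=n!\,w(I)$ for every independent $I$, and that sets containing an edge contribute nothing. The first is exactly the content of \eqref{eq:pr_bi} from the proof of Theorem~\ref{thm:main}. The second is the mutual exclusivity already noted there. I would also remark that ``bad vertex'' means membership in $\mathrm{Bad}(\pi)$, so that non-first vertices preceding all of their neighbours are counted, matching the abstract. Apart from these two points, the argument is a routine binomial-coefficient computation.
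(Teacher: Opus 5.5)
Your proof is correct and is essentially the paper's: in both, the key step is the double count $n!\sum_{|I|=j}w(I)=\sum_{\pi}\binom{|\mathrm{Bad}(\pi)|}{j}$, which rests on \eqref{eq:pr_bi} and on the fact that adjacent vertices cannot both be bad. Your finish is slightly cleaner: resumming $\sum_j\binom{m}{j}x^j=(1+x)^m$ gives $F(x)=\sum_m A_m(x+1)^m$ directly (the paper only records this as a later remark), whereas the paper expands $x^j=((x+1)-1)^j$ and then has to verify the alternating identity $H_k(r)=\mathbf 1_{\{r=k\}}$.
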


\begin{proof}
Write
\[
F(x)=n!P_G(x)=\sum_{j\ge0} c_j x^j,
\qquad
c_j:=n!\sum_{\substack{I\subseteq V\\ I\ \mathrm{indep},\,|I|=j}} w(I).
\]

To relate \(F(x)\) to the distribution of bad vertices, we first give a combinatorial interpretation of the coefficients \(c_j\).
For a permutation \(\pi\in\Omega\), let
\[
B(\pi):=\{v\in V:\ \pi\in B_v\}
\]
denote the set of bad vertices in \(\pi\), and write
\(
r(\pi):=|B(\pi)|.
\)
Note that \(B(\pi)\) is always an independent set. Indeed, if
\(u,v\in B(\pi)\) were adjacent, then the definition of \(B_u\)
would imply \(\pi(u)<\pi(v)\), while the definition of \(B_v\)
would imply \(\pi(v)<\pi(u)\), a contradiction.

From \eqref{eq:pr_bi},
\[
\Pr(B_I)=\frac{a(I)}{n}b(I)=w(I).
\]
Recall that \(B_I\) is the event that every vertex of \(I\) is bad. Equivalently,
\[
B_I=\{\pi\in\Omega:\ I\subseteq B(\pi)\}.
\]
Since \(\Omega\) consists of \(n!\) equiprobable orderings, it follows that
\[
n!\Pr(B_I)
=
\#\{\pi\in\Omega:\ I\subseteq B(\pi)\}.
\]
Therefore
\[
n!w(I)
=
\#\{\pi\in\Omega:\ I\subseteq B(\pi)\}.
\]
Since \(B(\pi)\) is independent, every subset of \(B(\pi)\) is independent. Hence the number of independent \(j\)-subsets of \(B(\pi)\) is \(\binom{r(\pi)}{j}\).
Consequently, counting ordered pairs \((\pi,I)\) where
\(\pi\in\Omega\) and \(I\) is an independent \(j\)-subset of
\(B(\pi)\) yields
\begin{equation}\label{eq:cj-sum}
c_j
=
\sum_{\substack{I\subseteq V\\ I\ \mathrm{indep},\,|I|=j}}
\#\{\pi\in\Omega:\ I\subseteq B(\pi)\}
=
\sum_{\pi\in\Omega}\binom{r(\pi)}{j},
\end{equation}
where \(\binom{r(\pi)}{j}=0\) whenever \(r(\pi)<j\).

We now expand \(F(x)\) in powers of \(x+1\). Using the binomial identity
\[
x^j=((x+1)-1)^j
=
\sum_{m=0}^{j}
\binom{j}{m}
(x+1)^m
(-1)^{j-m},
\]
we obtain
\[
F(x)
=
\sum_{j\ge0}
c_j
\sum_{m=0}^{j}
\binom{j}{m}
(-1)^{j-m}
(x+1)^m
=
\sum_{m\ge0}
\left(
\sum_{j\ge m}
c_j
\binom{j}{m}
(-1)^{j-m}
\right)
(x+1)^m.
\]

Since the coefficient of \((x+1)^k\) equals
\(F^{(k)}(-1)/k!\), we have
\begin{equation}\label{eq:deriv-coeff}
\frac{F^{(k)}(-1)}{k!}
=
\sum_{j\ge k}
(-1)^{j-k}
\binom{j}{k}
c_j.
\end{equation}

Substituting \eqref{eq:cj-sum} into
\eqref{eq:deriv-coeff} and exchanging the order of summation gives
\[
\frac{F^{(k)}(-1)}{k!}
=
\sum_{\pi\in\Omega}
\sum_{j\ge k}
(-1)^{j-k}
\binom{j}{k}
\binom{r(\pi)}{j}
=
\sum_{\pi\in\Omega}
H_k\!\bigl(r(\pi)\bigr),
\]
where for integers \(r\ge0\),
\[
H_k(r)
:=
\sum_{j=k}^{r}
(-1)^{j-k}
\binom{j}{k}
\binom{r}{j}.
\]

If \(r<k\), then the sum is empty and \(H_k(r)=0\).
Assume now that \(r\ge k\).
Using the identity
\[
\binom{r}{j}\binom{j}{k}
=
\binom{r}{k}\binom{r-k}{j-k},
\]
we obtain
\[
H_k(r)
=
\binom{r}{k}
\sum_{t=0}^{r-k}
(-1)^t
\binom{r-k}{t}.
\]

If \(r=k\), then the sum consists of the single term \(1\), and hence
\(
H_k(k)=1.
\)
If \(r>k\), then the binomial theorem gives
\[
H_k(r)
=
\binom{r}{k}
\sum_{t=0}^{r-k}
(-1)^t\binom{r-k}{t}
=
\binom{r}{k}(1-1)^{r-k}
=
0.
\]
Thus \(H_k(r)=1\) when \(r=k\), and \(H_k(r)=0\) otherwise.
Hence
\[
H_k\!\bigl(r(\pi)\bigr)
=
\mathbf 1_{\{r(\pi)=k\}}
\]
for every permutation \(\pi\), and hence
\[
\frac{F^{(k)}(-1)}{k!}
=
\sum_{\pi\in\Omega}
\mathbf 1_{\{r(\pi)=k\}}
=
\#\{\pi\in\Omega:\ r(\pi)=k\}
=
A_k.
\]

This completes the proof.
\end{proof}

\begin{remark}
Since the first vertex of any ordering is never bad, we have
\(r(\pi)\le n-1\) for every \(\pi\in\Omega\), and therefore \(A_n=0\).
Consequently,
\[
F(x)
=
\sum_{k=0}^{n-1}
A_k(x+1)^k.
\]
\end{remark}

\subsection{Truncation bounds for \(\sigma(G)\)}
\label{subsec:bonferroni}

The alternating sum \(\sigma(G)=n!\,P_G(-1)\) ranges over all independent sets of \(G\), the number of which may be exponentially large. We now show that its partial sums, grouped by the size of the independent set, bracket \(\sigma(G)\), with a remainder
that is exactly expressible in terms of the bad-vertex counts \(A_r\) of Theorem~\ref{thm:derivative}. Inequalities of this type go back to Bonferroni~\cite{bonferroni1936} (see Galambos and
Simonelli~\cite{galambos1996bonferroni} for a comprehensive treatment). In our setting they follow directly from the combinatorial interpretation \eqref{eq:cj-sum} of the coefficients of \(F(x)=n!\,P_G(x)\).

\begin{proposition}\label{prop:bonferroni}
Write \(F(x)=n!\,P_G(x)=\sum_{j\ge 0}c_j x^j\), so that
\(c_j=n!\sum_{|I|=j} w(I)\ge 0\), the sum being over independent sets of size \(j\).
Then for every integer \(t\ge 0\),
\[
\sum_{j=0}^{t}(-1)^j c_j-\sigma(G)
=
(-1)^t\sum_{r=t+1}^{\alpha(G)}\binom{r-1}{t}A_r.
\]
In particular, the partial sums alternately overestimate and underestimate
\(\sigma(G)\):
\[
\sum_{j=0}^{2t+1}(-1)^j c_j
\;\le\;
\sigma(G)
\;\le\;
\sum_{j=0}^{2t}(-1)^j c_j
\qquad(t\ge 0),
\]
the truncation error at level \(t\) is at most \(c_{t+1}\), and the truncation is
exact if and only if \(A_r=0\) for every \(r>t\).
\end{proposition}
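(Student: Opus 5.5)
We work directly from the combinatorial interpretation \eqref{eq:cj-sum}, namely $c_j=\sum_{\pi\in\Omega}\binom{r(\pi)}{j}$, and regroup it by the number of bad vertices. Since $A_r$ counts the orderings with exactly $r$ bad vertices, this gives
\[
c_j=\sum_{r\ge 0}\binom{r}{j}A_r .
\]
Because $B(\pi)$ is independent, $A_r=0$ for $r>\alpha(G)$, so all sums are finite. By Theorem~\ref{thm:derivative}, $\sigma(G)=A_0$.

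Substituting and swapping the order of summation yields
\[
\sum_{j=0}^{t}(-1)^j c_j=\sum_{r\ge0}A_r\sum_{j=0}^{t}(-1)^j\binom{r}{j}.
\]
We then use the standard partial alternating sum identity $\sum_{j=0}^{t}(-1)^j\binom{r}{j}=(-1)^t\binom{r-1}{t}$, valid for $r\ge1$. It follows by induction on $t$ from Pascal's rule $\binom{r}{j}=\binom{r-1}{j}+\binom{r-1}{j-1}$, since the sum telescopes. For $r=0$ the inner sum equals $1$, which produces the term $A_0=\sigma(G)$. For $1\le r\le t$ we have $\binom{r-1}{t}=0$, so those terms vanish. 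What remains is exactly
\[
(-1)^t\sum_{r=t+1}^{\alpha(G)}\binom{r-1}{t}A_r ,
\]
which is the claimed identity.

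The consequences then follow in order.
\begin{enumerate}
\item \emph{Bracketing.} Every remainder term $\binom{r-1}{t}A_r$ is nonnegative, so the sign of the error is $(-1)^t$. This gives the upper bound for even truncation levels ($2t$) and the lower bound for odd levels ($2t+1$).
\item \emph{Error bound.} For $r\ge t+1$ we have $\binom{r-1}{t}\le\binom{r}{t+1}$. Hence the absolute error is at most $\sum_r\binom{r}{t+1}A_r=c_{t+1}$.
\item \emph{Exactness.} For $r\ge t+1$ the coefficient $\binom{r-1}{t}$ is at least $1$. So the remainder vanishes if and only if $A_r=0$ for every $r>t$.
\end{enumerate}

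The only step needing care is the partial alternating binomial identity and its boundary cases ($r=0$, and $r\le t$). Everything else is bookkeeping on top of \eqref{eq:cj-sum}.
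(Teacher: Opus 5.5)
Your proposal is correct and follows essentially the same route as the paper: you regroup $c_j=\sum_{\pi\in\Omega}\binom{r(\pi)}{j}$ by $r$ and apply the partial alternating-sum identity $\sum_{j=0}^{t}(-1)^j\binom{r}{j}=(-1)^t\binom{r-1}{t}$ for $r\ge1$. You separate off the $r=0$ term, which gives $A_0=\sigma(G)$, and then read off the bracketing, the bound via $\binom{r-1}{t}\le\binom{r}{t+1}$, and the exactness criterion exactly as the paper does.
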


\begin{proof}
For integers \(r\ge 1\) and \(t\ge 0\), Pascal's rule gives, by induction on \(t\),
\begin{equation}\label{eq:alt_binom}
\sum_{j=0}^{t}(-1)^j\binom{r}{j}=(-1)^t\binom{r-1}{t}.
\end{equation}
By \eqref{eq:cj-sum}, \(c_j=\sum_{\pi\in\Omega}\binom{r(\pi)}{j}\). Summing
\eqref{eq:alt_binom} over all orderings and separating the orderings with
\(r(\pi)=0\), whose inner sum equals \(1\), we obtain
\[
\sum_{j=0}^{t}(-1)^j c_j
=
\sum_{\pi\in\Omega}\ \sum_{j=0}^{t}(-1)^j\binom{r(\pi)}{j}
=
A_0+(-1)^t\sum_{r\ge 1}\binom{r-1}{t}A_r.
\]
Since \(A_0=\sigma(G)\), since \(\binom{r-1}{t}=0\) for \(1\le r\le t\), and since
\(r(\pi)\le\alpha(G)\) because \(B(\pi)\) is independent, the claimed identity
follows. The right-hand side has sign \((-1)^t\), which yields the bracketing;
its absolute value is at most
\(\sum_{r>t}\binom{r}{t+1}A_r=c_{t+1}\) by \eqref{eq:cj-sum}, since
\(\binom{r-1}{t}\le\binom{r}{t+1}\) for \(r\ge t+1\); and it vanishes if and only if
\(A_r=0\) for every \(r>t\).
\end{proof}

\begin{remark}
Computing the bracketing bounds at level \(t\) requires only the independent sets of
size at most \(t+1\), of which there are \(O(n^{t+1})\). Proposition~\ref{prop:bonferroni}
therefore yields certified two-sided bounds for \(\sigma(G)\) in polynomial time for
each fixed \(t\), whereas the exact evaluation of \(P_G(-1)\) ranges over all of
\(\mathcal I(G)\).
\end{remark}

For graphs whose independent sets expand in the sense of
Section~\ref{subsec:multiplicative_growth}, the truncation error can be bounded explicitly.

\begin{corollary}\label{cor:sigma_expansion}
Let \(c>1\) and suppose that \(|N[I]|\ge c|I|\) for every independent set \(I\) of
\(G\). Then:
\begin{enumerate}[label=(\roman*)]
\item \(\alpha(G)\le\lfloor n/c\rfloor\); in particular, the truncation at any level
\(t\ge\lfloor n/c\rfloor\) is exact.
\item For every \(t\ge 0\) with \(t+1\le n/c\),
\[
\Bigl|\sigma(G)-\sum_{j=0}^{t}(-1)^j c_j\Bigr|
\;\le\;
c_{t+1}
\;\le\;
(n-1)!\,\binom{n}{t+1}\bigl(n-\lceil c(t+1)\rceil\bigr)
\prod_{j=1}^{t+1}\frac{j}{\lceil cj\rceil}.
\]
\end{enumerate}
\end{corollary}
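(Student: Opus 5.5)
For part (i), take a maximum independent set \(I\) of \(G\). Since \(I\) is independent, it is disjoint from \(N(I)\), so \(N[I]\subseteq V\) and \(|N[I]|\le n\). The hypothesis gives \(c\,\alpha(G)\le |N[I]|\le n\), and since \(\alpha(G)\) is an integer, \(\alpha(G)\le\lfloor n/c\rfloor\). For \(t\ge\alpha(G)\), Proposition~\ref{prop:bonferroni} gives an error sum \(\sum_{r=t+1}^{\alpha(G)}\) that is empty, so the truncation is exact.

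For part (ii), the first inequality is the bound \(|\text{error}|\le c_{t+1}\) from Proposition~\ref{prop:bonferroni}. For the second, I would write
\[
c_{t+1}=n!\sum_{|I|=t+1} w(I)=(n-1)!\sum_{|I|=t+1}a(I)\,b(I)
\]
and bound each factor uniformly.

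Every subset \(J\) of an independent set \(I\) is independent, so the hypothesis \(|N[J]|\ge c|J|\) holds for all nonempty \(J\subseteq I\). Thus every independent \(I\) is \(c\)-expanding, and Theorem~\ref{thm:expansion} gives
\[
b(I)\le\prod_{j=1}^{t+1}\frac{j}{\lceil cj\rceil}.
\]
Next, \(|N[I]|\ge c(t+1)\) and integrality give \(|N[I]|\ge\lceil c(t+1)\rceil\), so \(a(I)=n-|N[I]|\le n-\lceil c(t+1)\rceil\). The condition \(t+1\le n/c\) makes this right-hand side nonnegative, which keeps the bound meaningful. Finally, the number of independent sets of size \(t+1\) is at most \(\binom{n}{t+1}\). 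Multiplying these three bounds yields the claimed estimate.

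No step here is a real obstacle. The only care points are checking that the expansion hypothesis passes to subsets, which is needed to invoke Theorem~\ref{thm:expansion}, and handling the ceilings correctly.
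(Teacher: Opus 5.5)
Your proposal is correct and matches the paper's proof essentially step for step. Part (i) follows from $c|I|\le|N[I]|\le n$ together with $A_r=0$ for $r>\alpha(G)$ in Proposition~\ref{prop:bonferroni}, and part (ii) combines the error bound $c_{t+1}$ with three uniform estimates: on $b(I)$ via Theorem~\ref{thm:expansion}, on $a(I)\le n-\lceil c(t+1)\rceil$, and on the number of independent $(t+1)$-sets. Your extra checks are precisely the points the paper leaves implicit: that the expansion hypothesis passes to all subsets, and that $t+1\le n/c$ makes $n-\lceil c(t+1)\rceil\ge 0$, so the factor bounds may legitimately be multiplied.
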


\begin{proof}
For (i), every independent set satisfies \(c|I|\le|N[I]|\le n\), whence
\(|I|\le n/c\); exactness for \(t\ge\alpha(G)\) follows from
Proposition~\ref{prop:bonferroni}, since \(A_r=0\) for \(r>\alpha(G)\).
For (ii), each independent set \(I\) of size \(t+1\) is \(c\)-expanding, so
Theorem~\ref{thm:expansion} gives
\(b(I)\le\prod_{j=1}^{t+1} j/\lceil cj\rceil\), while
\(a(I)=n-|N[I]|\le n-\lceil c(t+1)\rceil\). Since there are at most
\(\binom{n}{t+1}\) such sets,
\[
c_{t+1}
=
(n-1)!\sum_{\substack{|I|=t+1\\ I\ \mathrm{indep}}}a(I)\,b(I)
\le
(n-1)!\,\binom{n}{t+1}\bigl(n-\lceil c(t+1)\rceil\bigr)
\prod_{j=1}^{t+1}\frac{j}{\lceil cj\rceil},
\]
and the claim follows from Proposition~\ref{prop:bonferroni}.
\end{proof}

\subsection{Decomposition and deletion}

\begin{theorem}[Vertex deletion]
\label{thm:vertex_set_deletion}
Let $G=(V,E)$ be a finite simple graph with $|V|=n$ and let $S\subseteq V$.
Let \(G' = G-S\) denote the graph obtained by removing the vertices in $S$ together with all edges incident to them, that is, the induced subgraph on $V\setminus S$.
Then the successive ordering polynomial satisfies
\begin{equation}
\label{eq:multi_vertex_poly_decomp}
P_G(x)
=
P_{G'}(x)
-
R_S(x)
+
U_S(x),
\end{equation}
where
\[
U_S(x)
=
\sum_{\substack{I\in\mathcal I(G)\\ I\cap S\neq\varnothing}}
w_G(I)\,x^{|I|}
\]
collects the contribution of independent sets intersecting $S$, and
\[
R_S(x)
=
\sum_{I\in\mathcal I(G')}
\bigl(w_{G'}(I)-w_G(I)\bigr)\,x^{|I|}
\]
accounts for the reweighting of independent sets that remain in $G'$.

Moreover, the following structural relations hold.

\begin{enumerate}
\item[(i)]
The independent sets of $G'$ are precisely those independent sets of $G$
that avoid $S$:
\[
\mathcal I(G')
=
\{\, I\in \mathcal I(G) : I\cap S=\varnothing \,\}.
\]

\item[(ii)]
For every $I\in\mathcal I(G')$,
\[
|N_{G'}[I]|
=
|N_G[I]| - |N_G[I]\cap S|,
\]
and consequently
\[
a_{G'}(I)
=
a_G(I)
-
\bigl(|S| - |N_G[I]\cap S|\bigr).
\]

\item[(iii)]
Let $\Delta b_I := b_{G'}(I)-b_G(I)$ for
$I\in\mathcal I(G')$, and set $\Delta b_{\varnothing}=0$.
Then for every nonempty $I\in\mathcal I(G')$,
\begin{equation}
\label{eq:multi_delta_b}
|N_{G'}[I]|\,\Delta b_I
=
\sum_{v\in I}\Delta b_{I\setminus\{v\}}
+
|N_G[I]\cap S|\, b_G(I).
\end{equation}
In particular, the values $\Delta b_I$ are uniquely determined by
induction on $|I|$.
\end{enumerate}
\end{theorem}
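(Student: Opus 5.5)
The plan is to verify the displayed decomposition as a bookkeeping identity, then check (i)–(iii) in turn. Note first that $P_{G'}$ is defined with $w_{G'}$, which uses $n'=n-|S|$ in the denominator; $U_S$ and $R_S$ are defined with the stated weights, so no new probabilistic input is needed.

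For \eqref{eq:multi_vertex_poly_decomp}, I will split the sum defining $P_G(x)$ according to whether $I\cap S=\varnothing$. The sets meeting $S$ give exactly $U_S(x)$. The remaining sets form $\mathcal I(G')$ by (i), so their contribution is $\sum_{I\in\mathcal I(G')}w_G(I)x^{|I|}$. This equals $\sum_{I\in\mathcal I(G')}w_{G'}(I)x^{|I|}-\sum_{I\in\mathcal I(G')}(w_{G'}(I)-w_G(I))x^{|I|}=P_{G'}(x)-R_S(x)$. So (i) should be proved first.

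Parts (i) and (ii) are routine.
\begin{itemize}
\item[(i)] Since $G'$ is an induced subgraph, a set $I\subseteq V\setminus S$ is independent in $G'$ if and only if it is independent in $G$.
\item[(ii)] For $I\subseteq V\setminus S$, we have $N_{G'}[I]=N_G[I]\setminus S$. This holds because an edge between two vertices of $V\setminus S$ survives deletion, and every vertex of $N_G[I]$ outside $S$ is either in $I$ or adjacent in $G$ to a vertex of $I$ by such a surviving edge. Hence $|N_{G'}[I]|=|N_G[I]|-|N_G[I]\cap S|$. Then $a_{G'}(I)=(n-|S|)-|N_{G'}[I]|$, which rearranges to the stated formula.
\end{itemize}

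For (iii), I will write the recursion \eqref{eq:def_b_rec} in the cleared form $|N[I]|\,b(I)=\sum_{v\in I}b(I\setminus\{v\})$, since $n-a(I)=|N[I]|$. I apply it in $G'$ and in $G$ for a nonempty $I\in\mathcal I(G')$; all subsets $I\setminus\{v\}$ also lie in $\mathcal I(G')$. Subtracting gives
\[|N_{G'}[I]|\,b_{G'}(I)-|N_G[I]|\,b_G(I)=\sum_{v\in I}\Delta b_{I\setminus\{v\}}.\]
Substituting $|N_G[I]|=|N_{G'}[I]|+|N_G[I]\cap S|$ from (ii) turns the left side into $|N_{G'}[I]|\,\Delta b_I-|N_G[I]\cap S|\,b_G(I)$, which is \eqref{eq:multi_delta_b}. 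The convention $\Delta b_\varnothing=0$ is consistent with $b_{G'}(\varnothing)=b_G(\varnothing)=1$.

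For uniqueness, $|N_{G'}[I]|\ge|I|\ge1$, so we can divide by it, and induction on $|I|$ determines every $\Delta b_I$.

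The only point needing care is the neighbourhood identity in (ii), in particular that no vertex of $N_G[I]\setminus S$ is lost when $S$ is deleted; the rest is direct algebra.
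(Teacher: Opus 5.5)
Your proposal is correct and follows essentially the same route as the paper's proof in the appendix. Both split $P_G$ according to whether $I$ meets $S$ and add and subtract $w_{G'}$; both use that $G'$ is induced for (i) and $N_{G'}[I]=N_G[I]\setminus S$ for (ii); and both subtract the cleared recurrences $|N[I]|\,b(I)=\sum_{v\in I}b(I\setminus\{v\})$ in $G$ and $G'$ for (iii), with uniqueness following from $|N_{G'}[I]|\ge|I|\ge 1$.
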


Since the proof of Theorem~\ref{thm:vertex_set_deletion} relies on mechanical partitioning and standard algebraic accounting of the neighborhood sizes, it is deferred to Appendix~\ref{appendix:vertex_deletion}.

\begin{proposition}[Multiplicativity over components]\label{prop:components}
Let \(G\) be a finite graph with connected components \(C_1,\dots,C_m\) of sizes
\(n_1,\dots,n_m\). Then
\[
P_G(x)\;=\;(x+1)^{m-1}\prod_{i=1}^{m}P_{C_i}(x).
\]
In particular, \(x=-1\) is a root of \(F(x)=n!\,P_G(x)\) of multiplicity exactly
\(m-1\), and the first nonvanishing derivative at \(x=-1\) satisfies
\[
\frac{F^{(m-1)}(-1)}{(m-1)!}
\;=\;
\binom{n}{n_1,\dots,n_m}\prod_{i=1}^{m}\sigma(C_i).
\]
\end{proposition}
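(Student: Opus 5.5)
The plan is to avoid manipulating the recursion for \(b(I)\) directly and instead use the combinatorial interpretation of \(F(x)=n!\,P_G(x)\) from Theorem~\ref{thm:derivative}. That theorem, together with the remark after it, gives
\[
F(x)=\sum_{k\ge0}A_k(x+1)^k=\sum_{\pi\in\Omega}(x+1)^{r(\pi)}.
\]
The first step is to check that this holds for any finite graph, not only connected ones.

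\begin{itemize}
\item The proof of Theorem~\ref{thm:derivative} uses only the identity \(\Pr(B_I)=w(I)\) from \eqref{eq:pr_bi} and the independence of \(B(\pi)\).
\item The derivation of \eqref{eq:pr_bi} in Section~\ref{sec:proof} never uses connectivity. It computes \(\Pr(F_0)=a(I)/n\) and then applies the symmetry argument for the conditional factors.
\end{itemize}

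Hence \(n!\,P_H(x)=\sum_{\pi}(x+1)^{r_H(\pi)}\) for every graph \(H\), and in particular for \(G\) and for each component \(C_i\).

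The core step is a bijection. Send an ordering \(\pi\) of \(V\) to the tuple \((\pi_1,\dots,\pi_m;\,\omega)\), where \(\pi_i\) is the induced ordering of \(C_i\) and \(\omega\) records which component occupies each position. There are \(\binom{n}{n_1,\dots,n_m}\) possible interleavings \(\omega\). Next I will relate bad vertices under this decomposition. All neighbours of \(v\in C_i\) lie in \(C_i\), so the condition ``\(v\) precedes all its neighbours'' depends only on \(\pi_i\). Therefore \(v\) is bad in \(\pi\) exactly when one of two things happens:
\begin{itemize}
\item \(v\) is bad in \(\pi_i\) (not first in \(C_i\), and earlier than all its neighbours); or
\item \(v\) is the first vertex of \(C_i\) but not the first vertex overall.
\end{itemize}
An isolated first vertex vacuously precedes all its neighbours, so it also falls under the second case. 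Exactly \(m-1\) components have a first vertex that is not globally first. This gives
\[
r(\pi)=(m-1)+\sum_{i=1}^m r_{C_i}(\pi_i).
\]

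Summing over the bijection then gives
\[
\sum_{\pi}(x+1)^{r(\pi)}=\binom{n}{n_1,\dots,n_m}(x+1)^{m-1}\prod_i\sum_{\pi_i}(x+1)^{r_{C_i}(\pi_i)}.
\]
Rewriting each factor as \(n_i!\,P_{C_i}(x)\) and dividing by \(n!\) turns the multinomial coefficient into \(1/\prod_i n_i!\), which cancels exactly and yields \(P_G(x)=(x+1)^{m-1}\prod_i P_{C_i}(x)\).

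For the multiplicity claim, Proposition~\ref{prop:recover} applied to each connected \(C_i\) gives \(P_{C_i}(-1)=\sigma(C_i)/n_i!\). This is positive, because every connected graph has a successive ordering (for example, a BFS order). So \(x=-1\) is a root of \(F\) of multiplicity exactly \(m-1\). The coefficient of \((x+1)^{m-1}\) is \(n!\prod_i\sigma(C_i)/n_i!\), which equals the stated multinomial expression. Since that coefficient is \(F^{(m-1)}(-1)/(m-1)!\), the formula follows.

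I expect the main obstacle to be the careful bad-vertex accounting: checking that the first vertex of each non-initial component is always bad, including isolated vertices and components of size one. I also need to confirm explicitly that the earlier results hold without connectivity. Everything else is routine bookkeeping.
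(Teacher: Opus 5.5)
Your proposal is correct and takes essentially the same route as the paper. Both arguments rest on the identity $F(x)=\sum_{\pi}(x+1)^{r(\pi)}$ from Theorem~\ref{thm:derivative}, which holds without connectivity, and on the additivity $r(\pi)=(m-1)+\sum_i r_{C_i}(\pi_i)$. The paper phrases that additivity through the number of local minima $L(\pi)=r(\pi)+1$ and uses independence of the induced orderings under a uniform random $\pi$; you get the same facts from an explicit case analysis and an interleaving bijection, and the multiplicity and leading-coefficient steps then coincide.
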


\begin{proof}
Call a vertex \(v\) a \emph{local minimum} of an ordering \(\pi\) if
\(\pi(v)<\pi(u)\) for all \(u\in N(v)\), and write \(L(\pi)\) for the number of
local minima of \(\pi\). The vertex in position \(1\) is always a local minimum,
and the bad vertices of \(\pi\) are precisely the remaining local minima, so
\(r(\pi)=L(\pi)-1\). By Theorem~\ref{thm:derivative}, whose proof does not use
connectivity,
\[
P_G(x)
=\frac{1}{n!}\sum_{k\ge0}A_k(x+1)^k
=\mathbb{E}_\pi\bigl[(x+1)^{L(\pi)-1}\bigr],
\]
the expectation being over a uniformly random ordering \(\pi\) of \(V\).
Since every neighbour of a vertex lies in the same component,
\(L(\pi)=\sum_{i=1}^{m}L_i(\pi_i)\), where \(\pi_i\) denotes the ordering induced
on \(C_i\). Under a uniform \(\pi\), the induced orderings
\(\pi_1,\dots,\pi_m\) are independent and uniform, whence
\[
\mathbb{E}_\pi\bigl[(x+1)^{L(\pi)}\bigr]
=\prod_{i=1}^{m}\mathbb{E}_{\pi_i}\bigl[(x+1)^{L_i(\pi_i)}\bigr]
=\prod_{i=1}^{m}(x+1)\,P_{C_i}(x),
\]
and dividing by \(x+1\) gives the product formula. Every connected graph admits
a successive ordering, so \(P_{C_i}(-1)=\sigma(C_i)/n_i!>0\) by
Proposition~\ref{prop:recover}; hence the multiplicity of the root at
\(x=-1\) is exactly \(m-1\). Comparing the coefficients of \((x+1)^{m-1}\)
yields the displayed formula.
\end{proof}

\subsection{Multivariate successive ordering polynomial}

The successive ordering polynomial admits a natural multivariate refinement obtained by assigning a variable \(x_v\) to each vertex \(v\in V\).

\begin{definition}
The \emph{multivariate successive ordering polynomial} of \(G\) is
\[
\mathcal P_G(\mathbf{x})
:=
\sum_{I\in\mathcal I(G)} w(I)\prod_{v\in I} x_v,
\]
where the weights \(w(I)\) are as in Definition~\ref{def:successive-poly}.
\end{definition}

By construction, the single-variable polynomial is recovered by substituting $x$ in place of $x_v$ for all \(v\in V\).

Define the indicator vector for an arbitrary set $S \subseteq V$ as  
\begin{equation*}
    (\mathbf 1_S)_v = 
    \begin{cases}
        1 & v \in S
        \\ 0 & \text{otherwise}
    \end{cases}
\end{equation*}

\begin{theorem}
    For any $S \subseteq V$,\quad $\mathcal{P}_G(-\mathbf{1}_S) = \Pr(G_S)$.
\end{theorem}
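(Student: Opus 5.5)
The plan is to evaluate the multivariate polynomial directly at the point $-\mathbf 1_S$ and then compare the result with Proposition~\ref{prop:general_U}. Substituting $x_v=-1$ for $v\in S$ and $x_v=0$ for $v\notin S$ into $\mathcal P_G(\mathbf x)=\sum_{I\in\mathcal I(G)} w(I)\prod_{v\in I}x_v$ gives, for each independent set $I$,
\[
\prod_{v\in I}(-\mathbf 1_S)_v=
\begin{cases}
(-1)^{|I|} & \text{if } I\subseteq S,\\
0 & \text{otherwise}.
\end{cases}
\]
We use the empty-product convention, so that $I=\varnothing$ contributes $w(\varnothing)=a(\varnothing)b(\varnothing)/n=1$.

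Consequently,
\[
\mathcal P_G(-\mathbf 1_S)=\sum_{\substack{I\subseteq S\\ I\ \mathrm{independent}}}(-1)^{|I|}\frac{a(I)\,b(I)}{n}.
\]
By Proposition~\ref{prop:general_U} with $U=S$, this expression is exactly $\Pr(G_S)$, which is the claim.

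The only point that needs care is the boundary case $S=\varnothing$. There $G_\varnothing=\Omega$ is the empty intersection, so $\Pr(G_\varnothing)=1$, and the sum reduces to the single term $w(\varnothing)=1$. The two sides therefore agree.

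No step here is a genuine obstacle. The argument is a substitution followed by one citation of Proposition~\ref{prop:general_U}, whose proof, via inclusion--exclusion and \eqref{eq:pr_bi}, does not use connectivity of $G$.
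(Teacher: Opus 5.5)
Your proof is correct and is essentially the paper's own argument. You substitute $x_v=-\mathbf 1_S(v)$, note that only independent sets $I\subseteq S$ survive, each with sign $(-1)^{|I|}$, and then invoke Proposition~\ref{prop:general_U} with $U=S$. Your separate check of $S=\varnothing$ is harmless, though the general substitution already covers it through the empty-product convention.
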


\begin{proof}
    Substituting $x_v = -\mathbf{1}_S(v)$ into the multivariate polynomial gives
    \[
        \mathcal{P}_G(-\mathbf{1}_S)
        = \sum_{\substack{I \subseteq V \\ I \text{ indep.}}} w(I) \prod_{v \in I} \bigl(-\mathbf{1}[v \in S]\bigr).
    \]
    Since $\mathbf{1}[v \in S] = 0$ for $v \notin S$, any independent set $I$ not contained in $S$ contributes zero to the sum. Restricting to the sets $I \subseteq S$ and noting that each factor becomes $-1$, we obtain
    \[
        \mathcal{P}_G(-\mathbf{1}_S)
        = \sum_{\substack{I \subseteq S \\ I \text{ indep.}}} (-1)^{|I|}\, w(I)
        = \Pr(G_S),
    \]
    where the last equality is Proposition~\ref{prop:general_U}.
\end{proof}

\begin{theorem}
    For any $S, T \subseteq V$,
    \[
        \left(\prod_{v \in T} \frac{\partial}{\partial x_v}\right)
        \mathcal{P}_G(-\mathbf{1}_S)
        = \Pr\bigl(B_T \cap G_{S \setminus T}\bigr).
    \]
\end{theorem}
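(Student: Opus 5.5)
We differentiate the multivariate polynomial term by term and then recognise the resulting alternating sum as an inclusion--exclusion expansion. Since $\mathcal P_G$ is multilinear (each $x_v$ appears to degree at most one), applying $\prod_{v\in T}\partial/\partial x_v$ to the monomial $w(I)\prod_{v\in I}x_v$ gives $w(I)\prod_{v\in I\setminus T}x_v$ if $T\subseteq I$ and zero otherwise. Hence
\[
\Bigl(\prod_{v\in T}\frac{\partial}{\partial x_v}\Bigr)\mathcal P_G(\mathbf x)
=\sum_{\substack{I\in\mathcal I(G)\\ I\supseteq T}} w(I)\prod_{v\in I\setminus T}x_v .
\]
In particular, if $T$ is not independent the sum is empty. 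In that case the right-hand side $\Pr(B_T\cap G_{S\setminus T})$ is also zero, because $\Pr(B_T)=0$ for non-independent $T$, as observed in Section~\ref{sec:proof}. So we may assume $T$ is independent.

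Next, evaluate at $\mathbf x=-\mathbf 1_S$, as in the proof of the preceding theorem. A term survives only when $I\setminus T\subseteq S$, that is, $I=T\sqcup J$ with $J\subseteq S\setminus T$. Each such term contributes $(-1)^{|J|}w(T\cup J)$, with the convention $w(I)=0$ unless $T\cup J$ is independent. Using \eqref{eq:pr_bi}, $w(I)=\Pr(B_I)$ for independent $I$, and $\Pr(B_I)=0$ otherwise. The derivative therefore equals
\[
\sum_{J\subseteq S\setminus T}(-1)^{|J|}\Pr\bigl(B_T\cap B_J\bigr),
\]
since $B_{T\cup J}=B_T\cap B_J$.

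Finally, we identify this sum with $\Pr(B_T\cap G_{S\setminus T})$. Because $G_v=\overline{B_v}$, we have $G_{S\setminus T}=\bigcap_{v\in S\setminus T}\overline{B_v}$. Applying inclusion--exclusion to the complements $\overline{B_v}$, $v\in S\setminus T$, inside the probability space restricted to the event $B_T$ (that is, to the indicator $\mathbf 1_{B_T}\prod_{v}(1-\mathbf 1_{B_v})$, expanded and integrated) gives
\[
\Pr\bigl(B_T\cap G_{S\setminus T}\bigr)=\sum_{J\subseteq S\setminus T}(-1)^{|J|}\Pr(B_T\cap B_J).
\]
This matches the previous display. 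The only care needed is bookkeeping. First, $T$ need not be contained in $S$, and the restriction $J\subseteq S\setminus T$ handles this automatically because $J$ is disjoint from $T$. Second, the vanishing of non-independent terms must agree on both sides, which holds because $w$ was extended by zero exactly where $\Pr(B_I)=0$.
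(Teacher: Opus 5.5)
Your proposal is correct and follows the paper's proof: you differentiate termwise to restrict to independent $I\supseteq T$, evaluate at $-\mathbf 1_S$ to force $I\setminus T\subseteq S$, use $w(I)=\Pr(B_I)$, recognise the alternating sum as inclusion--exclusion for $B_T\cap\bigcap_{v\in S\setminus T}\overline{B_v}$, and treat non-independent $T$ separately. Your reparametrisation $I=T\sqcup J$ and the explicit indicator expansion simply spell out the step the paper summarises as ``the same argument as in Proposition~\ref{prop:general_U}''.
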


\begin{proof}
    Differentiating the multivariate polynomial with respect to
    $\{x_v\}_{v \in T}$ and evaluating at $x_v = -\mathbf{1}_S(v)$ gives
    \[
        \left(\prod_{v \in T} \frac{\partial}{\partial x_v}\right)
        \mathcal{P}_G(-\mathbf{1}_S)
        = 
        \sum_{\substack{I \subseteq V \\ I \text{ indep.}}}
        w(I)\,\mathbf{1}[T \subseteq I]
        \prod_{v \in I \setminus T} \bigl(-\mathbf{1}[v \in S]\bigr).
    \]
    The indicator $\mathbf{1}[T \subseteq I]$ resulting from 
    the partial derivatives restricts the sum to
    independent sets containing~$T$, and the vanishing of
    $\mathbf{1}[v \in S]$ for $v \notin S$ further restricts to
    $I \subseteq S \cup T$. Absorbing the signs, we obtain
    \[
        \sum_{\substack{T \subseteq I \subseteq S \cup T \\
        I \text{ indep.}}} (-1)^{|I| - |T|}\, w(I)
        = \Pr\bigl(B_T \cap G_{S \setminus T}\bigr),
    \]
    where the last equality follows from inclusion--exclusion
    applied to the events $\{B_v\}_{v \in S \setminus T}$
    intersected with $B_T$, by the same argument as in Proposition~\ref{prop:general_U}.

    When T is not independent, there are no independent supersets of T and hence the sum is automatically 0. In this case, $B_T=\emptyset$ as well, since two adjacent vertices cannot both be bad, so both sides of the identity vanish. 
\end{proof}

\begin{remark}
    When we set $S = V$, we get the analogue of Theorem \ref{thm:derivative} for the multivariate polynomial. 
\end{remark}

\section{Conclusion and Future Directions}
\label{sec:conclusion}

We have derived an exact formula for the number of successive vertex orderings \(\sigma(G)\) of an arbitrary finite connected graph. The formula expresses \(\sigma(G)\) as an alternating sum over independent sets weighted by explicitly defined combinatorial quantities determined by the local neighbourhood structure of the graph. Beyond the enumeration formula itself, we developed structural properties of the associated weights \(b(I)\), obtained extremal bounds and explicit evaluations in several neighbourhood-growth regimes, and introduced both single-variable and multivariate graph polynomials associated with successive orderings. These polynomials encode not only the total number of successive orderings, but also the distribution of orderings according to the number of vertices that violate the successive condition.

Several concrete questions arise naturally from the present work. First, while Theorem~\ref{thm:main} yields an exact method for computing \(\sigma(G)\), its worst-case complexity remains exponential due to the underlying enumeration of independent sets. It remains open whether structural graph features such as bounded treewidth, separator decompositions, or automorphism-group symmetries can be exploited systematically to obtain substantially faster exact algorithms or provably accurate approximation schemes.
Second, the bounds developed in Section~\ref{sec:bounds} suggest a broader extremal theory for the quantities \(b(I)\) and \(\sigma(G)\). Determining sharp bounds for \(b(I)\) in terms of standard graph invariants such as minimum degree, maximum degree, independence number, clique number, chromatic number, or treewidth remains open. More generally, it is natural to ask which connected graphs maximize or minimize \(\sigma(G)\) among all graphs with prescribed structural constraints.
Finally, little is currently known about the asymptotic behaviour of \(\sigma(G)\) and the successive ordering polynomial on large graph families. Determining the growth of \(\sigma(G)\) and the distribution of bad vertices in random graphs, expander graphs, lattice graphs, and scale-free networks may reveal new connections between graph structure and connectivity-preserving growth.

\section*{Acknowledgments}
The authors thank Pranjal Dangwal for helpful discussions, especially for pointing out the connection to Möbius duality.

\clearpage
\appendix

\section{Algorithmic computation of successive vertex orderings \(\sigma(G)\)}
\label{appendix:algorithm}

\renewcommand{\thefigure}{A\arabic{figure}}
\renewcommand{\thetable}{A\arabic{table}}

\setcounter{figure}{0}
\setcounter{table}{0}

The practical steps to compute the number of successive vertex orderings of a given connected graph are as follows:

\begin{enumerate}
\item \emph{Enumerate independent sets.}
List all independent sets \(I\subseteq V\) of the graph \(G\). 

\item \emph{Compute neighbourhood complements.}
For each independent set \(I\), compute its open neighbourhood \(N(I)\) and the associated quantity \(a(I)=n-|N[I]|\) which counts the vertices lying outside the closed neighbourhood of \(I\).

\item \emph{Evaluate \(b(I)\).}
Compute \(b(I)\) for all independent sets using dynamic programming over increasing cardinality. Initialize \(b(\varnothing)=1\), and for nonempty \(I\) apply the recurrence
\[
b(I)=\frac{1}{\,n-a(I)\,}\sum_{v\in I} b(I\setminus\{v\}).
\]
At each stage, the required values \(b(I\setminus\{v\})\) are already available by construction.

\item \emph{Assemble the final sum.}
Form the alternating sum
\[
\sum_{\substack{I\subseteq V\\ I\ \mathrm{independent}}}
(-1)^{|I|}\frac{a(I)}{n}\,b(I),
\]
and multiply the result by \(n!\) to obtain \(\sigma(G)\). 
\end{enumerate}
 
The code used to compute $\sigma(G)$ and reproduce the examples in this appendix is available at \cite{svocode2026}.

\subsection{Worked example: graph of $C_5$ cycle with a chord}

Consider the connected graph \(G=(V,E)\) with
\[
V=\{v_1,v_2,v_3,v_4,v_5\},\qquad
E=\{\{v_1,v_2\},\{v_2,v_3\},\{v_3,v_4\},\{v_4,v_5\},\{v_5,v_1\},\{v_2,v_4\}\},
\]
that is, a \(5\)-cycle with a single chord \(\{v_2,v_4\}\) (Fig.~\ref{fig:example_graph}). Here, \(|V| = n =5\).

The independent sets of \(G\), together with the corresponding values of
\(a(I)\) and \(b(I)\), are listed in Table~\ref{tab:example_values}.

\begin{table}[h]
\centering
\begin{tabular}{cccc}
\toprule
Independent set \(I\) & \(|I|\) & \(a(I)\) & \(b(I)\) \\
\midrule
\(\varnothing\) & 0 & 5 & \(1\) \\
\(\{v_1\}\) & 1 & 2 & \(1/3\) \\
\(\{v_2\}\) & 1 & 1 & \(1/4\) \\
\(\{v_3\}\) & 1 & 2 & \(1/3\) \\
\(\{v_4\}\) & 1 & 1 & \(1/4\) \\
\(\{v_5\}\) & 1 & 2 & \(1/3\) \\
\(\{v_1,v_3\}\) & 2 & 0 & \(2/15\) \\
\(\{v_1,v_4\}\) & 2 & 0 & \(7/60\) \\
\(\{v_2,v_5\}\) & 2 & 0 & \(7/60\) \\
\(\{v_3,v_5\}\) & 2 & 0 & \(2/15\) \\
\bottomrule
\end{tabular}
\caption{Independent sets of \(G\) and the corresponding values of \(a(I)\) and \(b(I)\).}
\label{tab:example_values}
\end{table}

Using Theorem~\ref{thm:main}, we obtain
\[
\begin{aligned}
\sigma'(G)
&=
\sum_{\substack{I\subseteq V\\ I\ \mathrm{independent}}}
(-1)^{|I|}\frac{a(I)}{n}\,b(I) \\[4pt]
&=
1-
\Bigl(
3\cdot\frac{2}{5}\cdot\frac{1}{3}
+
2\cdot\frac{1}{5}\cdot\frac{1}{4}
\Bigr)
=\frac{1}{2}.
\end{aligned}
\]

Therefore,
\[
\sigma(G)=n!\,\sigma'(G)=5!\cdot\frac{1}{2}=60.
\]

The graph \(G\) has exactly 60 successive vertex orderings.

\begin{figure}
\centering
\begin{minipage}{0.37\textwidth}
    \centering
    \begin{tikzpicture}[scale=0.85, every node/.style={font=\small}]
      \foreach \i/\name/\angle in {
        1/v1/90, 2/v2/18, 3/v3/306, 4/v4/234, 5/v5/162}
      {\node[circle,draw,inner sep=1pt] (\name) at (\angle:1.5cm) {\name};}
      \draw (v1)--(v2)--(v3)--(v4)--(v5)--(v1);
      \draw (v2)--(v4);
    \end{tikzpicture}
\end{minipage}
\begin{minipage}{0.45\textwidth}
    \centering
    \def\levsep{1.9}
    \begin{tikzpicture}[scale=0.85, every node/.style={font=\small,align=center},
      setnode/.style={rectangle,draw,rounded corners,inner sep=3pt}]
      \node[setnode] (r) at (0,0) {$\varnothing$\\$b=1$};
    
      \node[setnode] (v1) at (-3.5,\levsep) {$\{v_1\}$\\$\tfrac13$};
      \node[setnode] (v2) at (-1.5,\levsep) {$\{v_2\}$\\$\tfrac14$};
      \node[setnode] (v3) at (0.5,\levsep)  {$\{v_3\}$\\$\tfrac13$};
      \node[setnode] (v4) at (2.5,\levsep)  {$\{v_4\}$\\$\tfrac14$};
      \node[setnode] (v5) at (4.5,\levsep)  {$\{v_5\}$\\$\tfrac13$};
    
      \node[setnode] (n13) at (-2.5,2*\levsep) {$\{v_1,v_3\}$\\$\tfrac{2}{15}$};
      \node[setnode] (n14) at (0.0,2*\levsep)  {$\{v_1,v_4\}$\\$\tfrac{7}{60}$};
      \node[setnode] (n25) at (2.5,2*\levsep) {$\{v_2,v_5\}$\\$\tfrac{7}{60}$};
      \node[setnode] (n35) at (4.5,2*\levsep) {$\{v_3,v_5\}$\\$\tfrac{2}{15}$};
    
      \foreach \x in {v1,v2,v3,v4,v5} \draw (\x)--(r);
      \draw (n13)--(v1) (n13)--(v3);
      \draw (n14)--(v1) (n14)--(v4);
      \draw (n25)--(v2) (n25)--(v5);
      \draw (n35)--(v3) (n35)--(v5);
    \end{tikzpicture}
\end{minipage}
\caption{The graph \(G\) and its independent-set lattice with computed $b(I)$ values.}
\label{fig:example_graph}
\end{figure}

\subsection{Computational complexity}

The presented algorithm has exponential worst-case complexity, since a graph may contain
\(O(2^n)\) independent sets. If independent sets are enumerated without duplication
(e.g., via depth-first search), independence can be tested in \(O(n)\) time, and each
arithmetic operation in constant time. The overall worst-case running time is therefore
\(O(n2^n)\). This is substantially faster than the naive brute-force method, which
enumerates all \(n!\) linear orderings and filters the successive ones. We note that it is possible to achieve $O(n2^n)$ time-complexity using dynamic programming but without considering independent sets or using inclusion-exclusion, e.g. by recursively calculating the number of ways to assemble each subset of vertices one-by-one. 

For example, consider the graph in Figure~\ref{fig:graph}. A naive brute force approach would enumerate all linear orderings of its $20$ vertices and retain those satisfying the successive condition. However, the number of all orderings is \(20! \approx 2.43 \times 10^{18}\). Since verifying the successive property requires at least linear time in $n$ for each ordering, exhaustive enumeration would require on the order of \(n \cdot 20!\) operations, which is computationally infeasible even for this moderate instance. In contrast, using our algorithm, the total number of successive vertex orderings of this graph was computed in under one second on a basic Lenovo Thinkpad laptop and yields the value \(1{,}300{,}835{,}454{,}464\).

In practice, our algorithm has a time complexity far less than the worst case $O(n2^n)$ bound since graphs typically have less than $2^n$ independent sets. For example, a 5-by-5 grid graph has only $55,447$ independent sets, which is less than $2^{25} = 33,554,432$, meaning the algorithm runs around 1000 times faster than the worst case bound would suggest. In general, counting the number of independent sets in a graph is also difficult and varies from graph to graph. But for typical graphs, the actual time to compute is significantly smaller than $O(n2^n)$. 

Further speed-ups are possible when the graph has nontrivial symmetries. If two independent sets can be mapped to one another by a symmetry of the graph, then they have identical values of \(a(I)\) and \(b(I)\) and contribute equally to the final sum. It therefore suffices to compute these quantities once for each symmetry class of independent sets. For example, in a complete bipartite graph there are only $O(n)$ types of independent sets (up to symmetry), and the values $a(I)$, $b(I)$ can be computed inductively in $O(n)$ total time, giving an overall running time of $O(n)$. 

More generally, there are interesting connections between using inclusion-exclusion and dynamic programming that are worth exploring further \cite{husfeldt2011, karp1982dynamic}. 

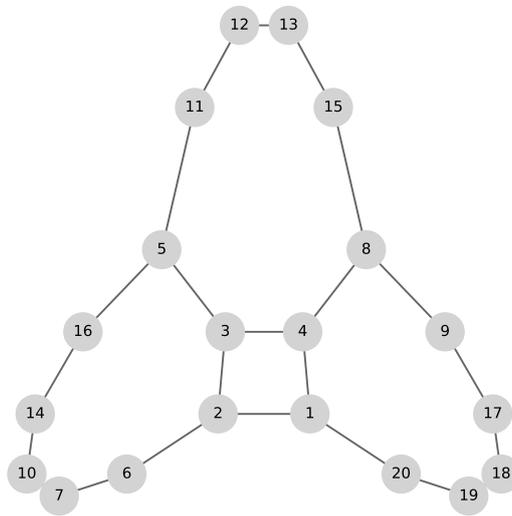
\begin{figure}[t]
    \centering
    \begin{tikzpicture}[scale= 0.5,
        every node/.style={circle, draw=none, fill=gray!40, minimum size=4mm, inner sep=0.5pt, font=\small},
        edge/.style={gray!70, line width=1.15pt, line cap=round, line join=round}]
    
    \node (12) at (4.99,10.43) {12};
    \node (13) at (6.02,10.43) {13};
    \node (11) at (4.04,8.69) {11};
    \node (15) at (6.97,8.69) {15};
    \node (5)  at (3.34,5.66) {5};
    \node (8)  at (7.66,5.66) {8};
    \node (16) at (1.69,3.92) {16};
    \node (3)  at (4.69,3.92) {3};
    \node (4)  at (6.32,3.92) {4};
    \node (9)  at (9.32,3.92) {9};
    \node (14) at (0.68,2.17) {14};
    \node (2)  at (4.53,2.17) {2};
    \node (1)  at (6.47,2.17) {1};
    \node (17) at (10.33,2.17) {17};
    \node (10) at (0.50,0.90) {10};
    \node (6)  at (2.61,0.90) {6};
    \node (20) at (8.39,0.90) {20};
    \node (18) at (10.51,0.90) {18};
    \node (7)  at (1.18,0.43) {7};
    \node (19) at (9.82,0.43) {19};
    
    \draw[edge] (12) -- (13);
    \draw[edge] (12) -- (11);
    \draw[edge] (13) -- (15);
    \draw[edge] (11) -- (5);
    \draw[edge] (15) -- (8);
    \draw[edge] (5) -- (16);
    \draw[edge] (5) -- (3);
    \draw[edge] (16) -- (14);
    \draw[edge] (14) -- (10);
    \draw[edge] (10) -- (7);
    \draw[edge] (7) -- (6);
    \draw[edge] (6) -- (2);
    \draw[edge] (3) -- (2);
    \draw[edge] (3) -- (4);
    \draw[edge] (4) -- (1);
    \draw[edge] (2) -- (1);
    \draw[edge] (4) -- (8);
    \draw[edge] (8) -- (9);
    \draw[edge] (9) -- (17);
    \draw[edge] (17) -- (18);
    \draw[edge] (18) -- (19);
    \draw[edge] (19) -- (20);
    \draw[edge] (20) -- (1);
    \end{tikzpicture}
\caption{A connected graph on $20$ vertices to illustrate the computational advantage of the proposed method over brute-force enumeration of all $20!$ linear orderings and filtering successive orderings.}
\label{fig:graph}
\end{figure}

\section{Successive vertex orderings from a prescribed independent set}
\label{appendix:seeded_svo}

The successive vertex ordering formalism developed in Sections~\ref{sec:introduction} and~\ref{sec:proof} describes a connected growth process beginning from a single initial vertex.  Theorem~\ref{thm:derivative} enumerates orderings according to the number of non-first vertices that appear before all of their neighbours. Such 'bad' vertices may occur at arbitrary positions in the ordering and correspond to the introduction of new disconnected seeds during the growth process. Here we consider a related but distinct question: how many connected growth sequences are possible when the process begins from a prescribed set of $k$ disconnected seed vertices and no additional seeds are introduced thereafter. Such orderings model growth processes that start simultaneously from several disconnected seeds which subsequently expand and merge into a connected structure.

Let \(G=(V,E)\) be a finite connected graph with \(|V|=n\), and let \(S\subseteq V\) be a fixed independent set with \(|S|=k\). 

\begin{definition}
\label{def:seeded}
An \emph{\(S\)-seeded successive vertex ordering} of \(G\) is a linear ordering
\(\pi:V\to\{1,2,\dots,n\}\) such that:
\begin{enumerate}
\item the first \(k\) positions are occupied exactly by the vertices of \(S\), in arbitrary order;

\item for every vertex \(v\in V\setminus S\), there exists a neighbour \(u\sim v\) satisfying \(\pi(u)<\pi(v)\).
\end{enumerate}
\end{definition}

Thus every vertex outside \(S\) must attach to a previously placed vertex. Such a vertex may be adjacent either to a seed vertex in \(S\) or to a vertex that was added after the initial seed set.

We denote by \(\tau(G,S)\) the number of \(S\)-seeded successive vertex orderings of \(G\).

Define \(R:=V\setminus S\), let \(H:=G[R]\) denote the induced subgraph on \(R\), and define
\(
W:=V\setminus N_G[S].
\)
Equivalently, \(W\) consists of those vertices of \(R\) which are not adjacent to any vertex of \(S\). Every vertex in \(R\setminus W\) already has a neighbour in \(S\), and therefore automatically satisfies the successive condition. Hence only vertices in \(W\) can violate the successive condition.

\begin{theorem}
\label{thm:multiseed}
Let \(G=(V,E)\) be a finite connected graph on \(n\) vertices, and let \(S\subseteq V\) be an independent set of size \(k\). Define
\[
R:=V\setminus S,
\qquad
H:=G[R],
\qquad
W:=V\setminus N_G[S].
\]

Then
\[
\tau(G,S)
=
k!(n-k)!
\sum_{\substack{I\subseteq W\\ I\in\mathcal I(H)}}
(-1)^{|I|}
\,b_H(I),
\]
where \(b_H(I)\) is defined recursively by
\[
b_H(\emptyset)=1,
\]
and
\[
b_H(I)
=
\frac{1}{|N_H[I]|}
\sum_{v\in I}
b_H(I\setminus\{v\}),
\qquad
I\neq\emptyset.
\]
\end{theorem}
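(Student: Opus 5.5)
Since the first $k$ positions are forced to hold $S$ in some order ($k!$ choices), the plan is to reduce to counting orderings $\pi_R$ of $R=V\setminus S$ placed in positions $k+1,\dots,n$. For such an ordering, a vertex $v\in R$ satisfies the condition iff it has a neighbour in $S$, or a neighbour in $R$ placed earlier. Vertices of $R\setminus W$ are always fine. So $\tau(G,S)=k!\cdot\#\{\pi_R\}$, where $\pi_R$ ranges over the $(n-k)!$ orderings of $R$ such that no $v\in W$ precedes all of its $H$-neighbours. Note that here there is no exemption for the first vertex: the first vertex of $\pi_R$ lies in $W$ only if it violates the condition. We choose $\pi_R$ uniformly at random and, for $v\in W$, define the bad event $D_v=\{\pi_R(v)<\pi_R(u)\ \forall u\in N_H(v)\}$. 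Then
\[
\tau(G,S)=k!(n-k)!\,\Pr\Bigl(\bigcap_{v\in W}\overline{D_v}\Bigr)
=k!(n-k)!\sum_{I\subseteq W}(-1)^{|I|}\Pr(D_I).
\]

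As in Section~\ref{sec:proof}, $D_u\cap D_v=\varnothing$ for adjacent $u,v$, so only sets $I\in\mathcal I(H)$ contribute. It remains to show $\Pr(D_I)=b_H(I)$ for independent $I\subseteq W$. This is the same computation that produced \eqref{eq:pr_bi}, but with the factor $F_0$ removed, since there is no ``not first'' requirement. We decompose $D_I=\bigsqcup_{\rho\in S_I}C'_\rho$, where $C'_\rho$ requires that $\rho_j$ is the earliest element of $N_H[\{\rho_j,\dots,\rho_k\}]$ for each $j$. The same chain-rule argument applies: conditioning on $F_1,\dots,F_{j-1}$ only constrains elements that precede the whole set $N_H[\{\rho_j,\dots,\rho_{|I|}\}]$, so the internal order stays uniform. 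This gives $\Pr(C'_\rho)=\prod_j |N_H[\{\rho_j,\dots\}]|^{-1}$. Summing over $\rho$ and splitting on $\rho_1$ yields exactly the recursion defining $b_H$, as in the derivation of \eqref{eq:def_b_rec} from \eqref{eq:def_b_sum}.

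The main point needing care is the first reduction. We must check that the condition on $W$-vertices really is ``precedes all $H$-neighbours'', using that $W$-vertices have no neighbours in $S$. We must also check that the first element of $\pi_R$ is handled correctly: it is automatically bad if it lies in $W$, and $D_v$ captures this because its condition is vacuous on earlier vertices. Finally, the conditional-uniformity step should be justified as in the main proof, via exchangeability of the relative order within the nested set given events that only concern which elements precede it.
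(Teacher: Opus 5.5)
Your proposal is correct and follows essentially the same route as the paper. Both fix one of the $k!$ seed orders and take a uniform ordering of $R$, apply inclusion--exclusion to the bad events $\{v \text{ precedes all of } N_H(v)\}$ for $v\in W$ restricted to $\mathcal I(H)$, and evaluate $\Pr(B_I)$ by the permutation decomposition and chain rule of Theorem~\ref{thm:main} with the factor $a(I)/n$ dropped, which gives the recursion for $b_H$; your explicit checks that $N_H(v)=N_G(v)$ for $v\in W$ and that the first vertex of the ordering of $R$ is not exempt spell out points the paper leaves implicit.
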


\begin{proof}
The vertices of \(S\) may appear in any order within the first \(k\) positions, giving \(k!\) possible orderings. Fix one such ordering. We then choose uniformly at random an ordering of the remaining vertices \(R=V\setminus S\). Since \(|R|=n-k\), there are \((n-k)!\) such orderings.

For each vertex \(v\in W\), define the bad event
\[
B_v
=
\{
\pi(v)<\pi(u)
\text{ for all }
u\in N_H(v)
\}.
\]
For a subset \(I\subseteq W\), define
\[
B_I:=\bigcap_{v\in I} B_v.
\]

Every vertex in \(R\setminus W\) already has a neighbour in \(S\), and therefore automatically satisfies the successive condition. Hence an ordering is valid if and only if none of the bad events \(B_v\) occur for \(v\in W\). Therefore, by inclusion--exclusion,
\[
\Pr(\text{valid ordering})
=
\Pr\!\left(\bigcap_{v\in W}\overline{B_v}\right)
=
\sum_{I\subseteq W}
(-1)^{|I|}
\Pr(B_I).
\]

Since \(W\subseteq R\), every set \(I\subseteq W\) is contained in the vertex set of \(H=G[R]\). If \(I\) contains adjacent vertices in \(H\), then \(B_I\) is impossible, so the sum reduces to independent sets of \(H\):
\[
\Pr(\text{valid ordering})
=
\sum_{\substack{I\subseteq W\\ I\in\mathcal I(H)}}
(-1)^{|I|}
\Pr(B_I).
\]

For an independent set \(I\subseteq W\), let
\[
p_H(I):=\Pr(B_I).
\]
The derivation of \(\Pr(B_I)\) is identical to that in the proof of
Theorem~\ref{thm:main}, with the only difference that the vertices of
\(S\) are already fixed in the first \(k\) positions. Consequently there
is no analogue of the factor \(a(I)/n\).

Repeating the same argument yields
\[
p_H(I)
=
\sum_{\rho\in S_I}
\prod_{j=1}^{|I|}
\frac{1}
{|N_H[\{\rho_j,\dots,\rho_{|I|}\}]|},
\]
where \(S_I\) denotes the set of permutations of \(I\).
Partitioning the permutations according to their first element gives
\[
p_H(I)
=
\frac{1}{|N_H[I]|}
\sum_{v\in I}
p_H(I\setminus\{v\}),
\]
with \(p_H(\emptyset)=1\).

Since \(b_H(I)\) satisfies the same recursion and initial condition,
it follows by induction on \(|I|\) that
\[
p_H(I)=b_H(I)
\]
for every independent set \(I \subseteq W\).

\end{proof}

\begin{proposition}\label{prop:seed-sum}
For any finite connected graph $G=(V,E)$,
\[
\sigma(G)=\sum_{s\in V}\tau(G,\{s\}).
\]
\end{proposition}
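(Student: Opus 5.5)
The plan is to prove the identity bijectively, by partitioning the successive orderings of \(G\) according to their first vertex; no algebraic manipulation of the formulas of Theorem~\ref{thm:main} or Theorem~\ref{thm:multiseed} is needed. For a single vertex \(s\), the set \(S=\{s\}\) is trivially independent, so \(\tau(G,\{s\})\) is well defined by Definition~\ref{def:seeded} with \(k=1\).

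The key step is to show that, for each fixed \(s\in V\), the \(\{s\}\)-seeded successive orderings are exactly the successive orderings \(\pi\) with \(\pi(s)=1\).

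\emph{Seeded implies successive.} If \(\pi\) is \(\{s\}\)-seeded, then condition~1 gives \(\pi(s)=1\). Condition~2 says that every \(v\neq s\) has a neighbour placed earlier. Since \(s\) is the only vertex with \(\pi(v)=1\), this is precisely the successive condition.

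\emph{Successive implies seeded.} Conversely, let \(\pi\) be successive with \(\pi(s)=1\). The first position is then occupied by \(s\), so condition~1 holds. Every vertex \(v\in V\setminus\{s\}\) has \(\pi(v)>1\), so the successive property supplies a neighbour \(u\sim v\) with \(\pi(u)<\pi(v)\), which is condition~2.

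With this in hand, the set of successive orderings decomposes as a disjoint union over \(s\in V\) of the sets \(\{\pi \text{ successive}:\pi^{-1}(1)=s\}\). These sets are disjoint because each ordering has exactly one first vertex, and they cover all successive orderings because every ordering has some first vertex. Taking cardinalities gives \(\sigma(G)=\sum_{s}\tau(G,\{s\})\).

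I do not expect a real obstacle. The only point needing care is that the bijection is taken with respect to the definitions, not the closed formula of Theorem~\ref{thm:multiseed}, so no reconciliation of \(b_H\) with \(b\) is required. As an optional sanity check, one could match the formulas directly: by Theorem~\ref{thm:multiseed}, \(\tau(G,\{s\})=(n-1)!\Pr(\ldots)\). Summing over \(s\) should then reproduce \(n!\) times the average of the conditional probabilities, which is exactly the first-vertex conditioning above. I would not pursue this, since the bijective argument is complete.
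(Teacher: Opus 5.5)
Your proposal is correct and follows essentially the same route as the paper: partition the successive orderings by their first vertex $s$, and observe that the $\{s\}$-seeded orderings of Definition~\ref{def:seeded} are exactly the successive orderings with $\pi(s)=1$. The paper's proof checks both inclusions and the disjointness of the classes in the same way, without using the formula of Theorem~\ref{thm:multiseed}.
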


\begin{proof}
Every successive vertex ordering $\pi$ of $G$ has a unique first vertex
$s=\pi^{-1}(1)$. For this $s$, the ordering $\pi$ satisfies
Definition~\ref{def:seeded}: the seed $s$ occupies position~$1$, and every
subsequent vertex has a neighbour appearing earlier. Hence $\pi$ is counted
by $\tau(G,\{s\})$.

Conversely, every ordering counted by $\tau(G,\{s\})$ is a successive ordering
of $G$ with first vertex $s$. The sets of orderings counted by
$\tau(G,\{s\})$ for distinct $s\in V$ are therefore pairwise disjoint, and
their union is the set of all successive orderings of $G$. This gives the
claimed identity.
\end{proof}

\section{Proof of vertex deletion Theorem~\ref{thm:vertex_set_deletion}}
\label{appendix:vertex_deletion}

Here we present the explicit verification of the polynomial decomposition and neighbourhood identities under vertex set deletion.

\begin{proof}
We begin with the polynomial decomposition.
By definition,
\[
P_G(x)
=
\sum_{I\in\mathcal I(G)} w_G(I)x^{|I|}
\]
Partition the independent sets of $G$ according to whether they
intersect $S$ or avoid $S$. This gives
\[
P_G(x)
=
\sum_{\substack{I\in\mathcal I(G)\\ I\cap S=\varnothing}}
w_G(I)x^{|I|}
+
\sum_{\substack{I\in\mathcal I(G)\\ I\cap S\neq\varnothing}}
w_G(I)x^{|I|}
\]
The second sum is precisely $U_S(x)$.
For the first sum, every independent set avoiding $S$
lies in $\mathcal I(G')$, so
\[
\sum_{\substack{I\in\mathcal I(G)\\ I\cap S=\varnothing}}
w_G(I)x^{|I|}
=
\sum_{I\in\mathcal I(G')} w_G(I)x^{|I|}
\]
Adding and subtracting the weights $w_{G'}(I)$ yields
\[
\sum_{I\in\mathcal I(G')} w_G(I)x^{|I|}
=
\sum_{I\in\mathcal I(G')} w_{G'}(I)x^{|I|}
-
\sum_{I\in\mathcal I(G')}
\bigl(w_{G'}(I)-w_G(I)\bigr)x^{|I|}
\]
The first term equals $P_{G'}(x)$ and the second is $R_S(x)$,
which proves
\[
P_G(x)=P_{G'}(x)-R_S(x)+U_S(x)
\]

\medskip

(i)
Independence of a set depends only on edges whose endpoints lie inside
the set. Since $G'$ is obtained from $G$ by deleting the vertices in $S$
together with all incident edges, the adjacency relations among
vertices in $V\setminus S$ remain unchanged. Thus a subset
$I\subseteq V\setminus S$ is independent in $G'$ if and only if it is
independent in $G$, which establishes the stated description of
$\mathcal I(G')$.

\medskip

(ii)
For $I\subseteq V\setminus S$ we have
\[
N_{G'}[I]=N_G[I]\cap (V\setminus S)
\]
Taking cardinalities gives
\[
|N_{G'}[I]|
=
|N_G[I]|-|N_G[I]\cap S|
\]
Since $|V(G')|=n-|S|$, it follows that
\[
a_{G'}(I)
=
(n-|S|)-|N_{G'}[I]|
=
n-|N_G[I]|-\bigl(|S|-|N_G[I]\cap S|\bigr),
\]
which yields the claimed relation.

\medskip

(iii)
The defining recurrences for $b_G$ and $b_{G'}$ are
\[
|N_G[I]|\,b_G(I)
=
\sum_{v\in I} b_G(I\setminus\{v\}),
\qquad
|N_{G'}[I]|\,b_{G'}(I)
=
\sum_{v\in I} b_{G'}(I\setminus\{v\})
\]
Taking the difference between these identities and writing
$b_{G'}(I)=b_G(I)+\Delta b_I$ gives
\[
|N_{G'}[I]|\Delta b_I
=
\sum_{v\in I}\Delta b_{I\setminus\{v\}}
+
\bigl(|N_G[I]|-|N_{G'}[I]|\bigr)b_G(I).
\]
By part (ii), the difference $\bigl(|N_G[I]|-|N_{G'}[I]|\bigr)$ equals $|N_G[I]\cap S|$,
which yields \eqref{eq:multi_delta_b}.

For nonempty $I$, we have $|N_{G'}[I]|\ge |I|>0$, so we may divide by $|N_{G'}[I]|$ to express $\Delta b_I$ in terms of $b_G(I)$ and the values $\Delta b_J$ with $J\subsetneq I$. This determines $\Delta b_I$ uniquely by induction on $|I|$, similar to solving a triangular system of equations.
\end{proof}

\section{A leaf recursion and universal lower bound for $\sigma(G)$}

For general connected graphs, the number of successive vertex orderings is given by the inclusion--exclusion formula of Theorem~\ref{thm:main}. For trees, however, the enumeration of successive vertex orderings admits a simple recursion on the leaf structure, avoiding the inclusion--exclusion machinery altogether.

\begin{lemma}[Leaf recursion]
\label{lem:leafrecursion}
Let T be a tree on $n\ge 2$ vertices. Then
\[
\sigma(T)=\sum_{\ell\in L(T)} \sigma(T-\ell),
\]
where \(L(T)\) denotes the set of leaves of \(T\).
\end{lemma}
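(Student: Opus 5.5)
The natural approach is a bijection based on the \emph{last} vertex of a successive ordering. I will show that in any successive ordering \(\pi\) of a tree \(T\) with \(n\ge2\) vertices, the vertex \(\ell=\pi^{-1}(n)\) in the final position is a leaf. Conversely, for any leaf \(\ell\), appending \(\ell\) to a successive ordering of \(T-\ell\) gives a successive ordering of \(T\). Partitioning successive orderings of \(T\) by their last vertex then yields the identity.

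\textbf{Step 1: the last vertex is a leaf.} I will use the equivalent characterisation from Section~\ref{sec:introduction}: \(\pi\) is successive if and only if every prefix \(\{v:\pi(v)\le i\}\) induces a connected subgraph. Applying this with \(i=n-1\), the graph \(T-\ell\) is connected. In a tree, removing a vertex of degree \(d\) leaves exactly \(d\) components. Since \(T\) is connected and \(n\ge2\), \(\ell\) has degree at least \(1\), so connectivity of \(T-\ell\) forces \(\deg\ell=1\), i.e.\ \(\ell\in L(T)\).

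\textbf{Step 2: restriction and extension.} Fix a leaf \(\ell\). If \(\pi\) is successive for \(T\) with \(\pi(\ell)=n\), its restriction \(\pi'\) to \(V\setminus\{\ell\}\) is a linear ordering of \(T-\ell\). Its prefixes coincide with the first \(n-1\) prefixes of \(\pi\), and each of these is connected; they do not contain \(\ell\), so they are also connected in \(T-\ell\). Hence \(\pi'\) is successive, and \(T-\ell\) is a tree, so \(\sigma(T-\ell)\) makes sense. Conversely, given a successive ordering of \(T-\ell\), place \(\ell\) last. The unique neighbour of \(\ell\) precedes it, and every other vertex keeps its earlier neighbour, so the extended ordering is successive for \(T\). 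These two maps are mutually inverse, so the number of successive orderings of \(T\) ending in \(\ell\) equals \(\sigma(T-\ell)\).

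\textbf{Step 3: sum.} The classes indexed by the last vertex are disjoint, and by Step 1 only leaves occur as last vertices. Summing over \(\ell\in L(T)\) gives the formula. The only case needing care is \(n=2\): then \(T-\ell\) is a single vertex with \(\sigma=1\), consistent with \(\sigma(K_2)=2\). The main point requiring justification is Step 1, namely the degree/component count in trees, which is standard; the rest is direct bookkeeping.
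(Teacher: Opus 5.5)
Your proposal is correct and follows the paper's proof essentially step for step. Both partition the successive orderings by their final vertex, use connectivity of the length-$(n-1)$ prefix to force that vertex to be a leaf, and match orderings ending in $\ell$ bijectively with successive orderings of $T-\ell$ by deleting or appending $\ell$.
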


\begin{proof}
In any successive ordering of a tree, the last vertex must be a leaf. Indeed, let \(v\) be the final vertex of a successive ordering of \(T\). Then the first \(n-1\) vertices induce the graph \(T-v\). Since every initial segment of a successive ordering induces a connected subgraph, \(T-v\) must be connected. Removing a non-leaf vertex from a tree disconnects it, so \(v\) must be a leaf.

The successive orderings of \(T\) are therefore partitioned according to their final vertex. Fix a leaf \(\ell\in L(T)\). If a successive ordering of \(T\) ends in \(\ell\), then deleting \(\ell\) yields a successive ordering of \(T-\ell\). Conversely, every successive ordering of \(T-\ell\) extends to a successive ordering of \(T\) by placing \(\ell\) last. This is valid because the unique neighbour of \(\ell\) belongs to \(T-\ell\) and therefore appears earlier than \(\ell\) in the resulting ordering. These two operations are inverse to one another, and hence define a bijection between successive orderings of \(T-\ell\) and successive orderings of \(T\) ending in \(\ell\).

Summing over all leaves \(\ell\) gives the result.
\end{proof}

Lemma~\ref{lem:leafrecursion} expresses \(\sigma(T)\) recursively in terms of smaller trees obtained by deleting leaves. Iterating the recursion shows that \(\sigma(T)\) may be viewed as a sum over all valid sequences of leaf deletions of \(T\). As a first application, we obtain a universal lower bound for \(\sigma(G)\) valid for every connected graph.

\begin{proposition}
\label{prop:lowerbound}
Let \(G\) be a connected graph on \(n\) vertices. Then
\[
\sigma(G)\ge 2^{n-1}.
\]
\end{proposition}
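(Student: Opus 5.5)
We prove $\sigma(G)\ge 2^{n-1}$ in two steps. First we reduce to trees using monotonicity under adding edges. Then we handle trees by induction using Lemma~\ref{lem:leafrecursion}.

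\emph{Reduction to trees.} Let $T$ be a spanning tree of $G$. Every successive ordering of $T$ is also successive for $G$: if a vertex has an earlier $T$-neighbour, that vertex is also an earlier $G$-neighbour. Hence $\sigma(G)\ge\sigma(T)$, and it suffices to show $\sigma(T)\ge 2^{n-1}$ for every tree $T$ on $n$ vertices.

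\emph{Trees by induction on $n$.} For $n=1$ we have $\sigma(T)=1=2^0$. For $n=2$ both orderings are successive, so $\sigma(T)=2=2^1$.

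Now let $n\ge 2$ and assume the bound holds for all trees on $n-1$ vertices. Every tree on $n\ge 2$ vertices has at least two leaves. For each leaf $\ell$, the graph $T-\ell$ is a tree on $n-1$ vertices. Lemma~\ref{lem:leafrecursion} and the induction hypothesis then give
\[
\sigma(T)=\sum_{\ell\in L(T)}\sigma(T-\ell)\ \ge\ |L(T)|\cdot 2^{n-2}\ \ge\ 2\cdot 2^{n-2}=2^{n-1}.
\]

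None of these steps is a real obstacle. The key input is that a tree always has at least two leaves, which supplies the factor $2$ at each step of the recursion. The only care needed is in the base case: if we start the induction at $n=1$, the leaf recursion does not apply there, so the case $n=2$ must be checked directly, as done above.

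Equality holds for paths, since a path has exactly two leaves at every stage of the recursion. This confirms that the bound is sharp.
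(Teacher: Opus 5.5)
Your proof is correct and follows the paper's argument essentially verbatim: you reduce to a spanning tree via $\sigma(G)\ge\sigma(T)$, then induct on $n$ using the leaf recursion of Lemma~\ref{lem:leafrecursion} together with the fact that every tree has at least two leaves. The separate $n=2$ check is harmless but unnecessary, since Lemma~\ref{lem:leafrecursion} already holds at $n=2$ (with $T-\ell\cong K_1$ and $\sigma(K_1)=1$), so the $n=1$ base case suffices.
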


\begin{proof}
Let \(T\) be a spanning tree of \(G\). Every successive ordering of \(T\) is also a successive ordering of \(G\), since any connected prefix in \(T\) remains connected in the supergraph \(G\). Hence
\[
\sigma(G)\ge \sigma(T),
\]
so it suffices to prove the bound for trees.

We proceed by induction on \(n\). The case \(n=1\) is immediate. Let \(T\) be a tree on \(n\ge2\) vertices. By Lemma~\ref{lem:leafrecursion},
\[
\sigma(T)
=
\sum_{\ell\in L(T)}
\sigma(T-\ell).
\]
For every leaf \(\ell\), the graph \(T-\ell\) is a tree on \(n-1\) vertices. By the induction hypothesis,
\[
\sigma(T-\ell)\ge 2^{n-2}.
\]
Since every tree has at least two leaves,
\[
\sigma(T)
=
\sum_{\ell\in L(T)}
\sigma(T-\ell)
\ge
2\cdot 2^{n-2}
=
2^{n-1}.
\]
This proves the claim.
\end{proof}

\begin{remark}
The lower bound is sharp. Indeed, if equality holds in Proposition~\ref{prop:lowerbound} for a tree \(T\), then
\[
\sigma(T)
=
\sum_{\ell\in L(T)}
\sigma(T-\ell)
=
2^{n-1}.
\]
Since each term satisfies \(\sigma(T-\ell)\ge 2^{n-2}\), if \(|L(T)|\ge 3\) then
\[
\sigma(T)
=
\sum_{\ell\in L(T)}
\sigma(T-\ell)
\ge
3\cdot 2^{n-2}
>
2^{n-1},
\]
which is impossible. Hence equality forces \(|L(T)|=2\). A tree with exactly two leaves is a path, so \(T\cong P_n\).

For the converse, let \(T=P_n\). The vertices appearing in any successive ordering always form a contiguous subpath. This follows by induction: the first vertex forms a trivial subpath, and each subsequent vertex must be adjacent to the current subpath, so it can only be attached at one of its two endpoints. Equivalently, working backwards from the full path, one may repeatedly delete either the left or right endpoint. At each of the \(n-1\) deletion steps there are exactly two choices, giving
\[
\sigma(P_n)=2^{n-1}.
\]
\end{remark}

\bibliographystyle{alphaurl}
\bibliography{references}

\end{document}